	\pgfpointspherical{\tikz@cs@angle}{\tikz@cs@latitude}{\tikz@cs@xradius}
\tikzset{my color/.code=\pgfmathparse{(#1+90)/180*100}\pgfkeysalso{every path/.style={color=red!\pgfmathresult!blue}}}
\DeclareRobustCommand{\SkipTocEntry}[5]{}
\newtheorem{prop}{Proposition}[section]
\newtheorem{thm}[prop]{Theorem}
\newtheorem{cor}[prop]{Corollary}
\newtheorem{lem}[prop]{Lemma}
\theoremstyle{definition}
\newtheorem{defn}[prop]{Definition}
\newtheorem{cons}[prop]{Construction}
\newtheorem{notn}[prop]{Notation}
\theoremstyle{remark}
\newtheorem{cau}[prop]{Caution}
\newtheorem{conv}[prop]{Convention}
\newtheorem{rem}[prop]{Remark}
\renewcommand{\AA}{{\mathbb A}}
\newcommand{\bbf}{{\bf f}}
\newcommand{\bbg}{{\bf g}}
\newcommand{\DeclareMyOperator}[1]{%
	\expandafter\DeclareMathOperator\csname #1\endcsname{#1}
}
\newcommand{\DeclareMathOperators}{\forcsvlist{\DeclareMyOperator}}
\numberwithin{equation}{prop}
\DeclareMathOperator{\Aff}{Aff}
\DeclareMathOperator{\Aut}{Aut}
\DeclareMathOperator{\B}{B}
\DeclareMathOperator{\Cart}{Cart}
\DeclareMathOperator{\Cat}{Cat}
\DeclareMathOperator{\Ch}{Ch}
\DeclareMathOperator{\colim}{colim}
\DeclareMathOperator{\DK}{DK}
\DeclareMathOperator{\End}{End}
\DeclareMathOperator{\Ext}{Ext}
\DeclareMathOperator{\Fun}{Fun}
\DeclareMathOperator{\GL}{GL}
\DeclareMathOperator{\Grp}{Grp}
\DeclareMathOperator{\Hom}{Hom}
\DeclareMathOperator{\id}{id}
\DeclareMathOperator{\Kan}{Kan}
\DeclareMathOperator{\Map}{Map}
\DeclareMathOperator{\Mod}{Mod}
\DeclareMathOperator{\Ob}{Ob}
\DeclareMathOperator{\Perf}{Perf}
\DeclareMathOperator{\PSt}{PSt}
\DeclareMathOperator{\RAut}{\mathbf RAut}
\DeclareMathOperator{\REnd}{\mathbf REnd}
\DeclareMathOperator{\RHom}{\mathbf RHom}
\DeclareMathOperator{\Set}{Set}
\DeclareMathOperator{\sk}{sk}
\DeclareMathOperator{\SMap}{\mathscr Map}
\DeclareMathOperator{\SREnd}{\mathscr End}
\DeclareMathOperator{\TwArr}{TwArr}
\DeclareMathOperator{\UHom}{\underline{Hom}}
\DeclareMathOperator{\WCat}{WCat}
\newcommand{\natmap}[5]{\xymatrix{
		#1 \ar@/^/[r]^{#3} \ar@/_/[r]_{#4} \ar@{}[r]|{\Downarrow_{#5}} & #2
}}
\newcommand{\adjoints}[4]{\xymatrix{#1 \ar@/^/[rr]^{#3} & \perp & #2 \ar@/^/[ll]^{#4}}}
\newcommand{\smalladjoints}[4]{\xymatrix@C=5pt{#1 \ar@/^/[rr]^{#3} & \perp & #2 \ar@/^/[ll]^{#4}}}
\newcommand{\xtworightarrows}[4]{\xymatrix{#1 \ar@<.5ex>[r]^{#3} \ar@<-.5ex>[r]_{#4} & #2}}
\newcommand{\xtwoleftarrows}[4]{\xymatrix{#1 &  \ar@<-.5ex>[l]_{#3} \ar@<.5ex>[l]^{#4}  #2}}
\newcommand{\xrightleftarrows}[4]{\xymatrix{#1 \ar@/^/[r]^{#3} & #2 \ar@/^/[l]^{#4}}}
\newcommand{\dg}[0]{\mathrm{dg}}
\newcommand{\op}[0]{\mathrm{op}}
\newcommand{\proj}[0]{\mathrm{proj}}
\newcommand{\qis}[0]{\mathrm{qis}}
\newcommand{\SHom}[1]{\mathscr Hom#1}
\theoremstyle{remark}
\theoremstyle{definition}
\theoremstyle{plain}
\newtheorem{corollary}[prop]{Corollary}
\newtheorem{theorem}[prop]{Theorem}
\newtheorem{lemma}[prop]{Lemma}
\newcommand{\msD}{{\mathscr D}}
\newcommand{\msX}{{\mathscr X}}
\newcommand{\msY}{{\mathscr Y}}
\newenvironment{customthm}[1]
  {\innercustomthm}
  {\endinnercustomthm}
\begin{document}

\title{Twisted forms of perfect complexes and Hilbert 90}

\author{Ajneet Dhillon}
\email{adhill3@uwo.ca}
\address{Western University, Canada}

\author{P\'al Zs\'amboki}
\email{zsamboki@renyi.hu}
\address{R\'enyi Institute, Hungary}

\begin{abstract}
	Automorphisms of a perfect complex naturally have the structure of an $\infty$-group: the 1-morphisms are
	quasi-isomorphisms, the 2-morphisms are homotopies, etc. This article starts by proving some basic properties of this $\infty$-group. We go on to study the deformation theory of this stack of $\infty$-groups and
	give a criterion for this stack to be formally smooth. The classifying stack of this $\infty$-group 
	classifies forms of a complex. We discuss a version of Hilbert 90 for perfect complexes.
 
\end{abstract}

\maketitle
\tableofcontents

\section{Introduction}

The purpose of this article is to study the automorphism group associated to a perfect complex $E$.
As perfect complexes live in derived categories, or some enhanced derived category, this object naturally acquires a higher categorical structure. Our purpose is to prove some elementary properties of ${\rm Aut}(E)$ such as this $\infty$-stack is in fact algebraic. We study its deformation theory.
The final section of this article give applications to Hilbert 90 type theorems.

There are a number of models for higher categories. In this article we will use quasi-categories.
In the the context of dg-categories a related result has appeared in  \cite{toen2007moduli}: let $S$ be an affine scheme, and $X\xrightarrow fS$ a smooth and proper morphism of schemes. Then the stack $\Perf_{X/S}$ of families of perfect complexes along $f$ is locally algebraic and locally of finite type \cite{toen2007moduli}*{Corollary 3.29}. Let $E$ be a perfect complex on $X$. This result implies that $\Aut_{X/S}(E)$ is algebraic. Using a similar argument to the proof in op.~cit., we  prove a similar
result but do not require that $f$ be smooth. More precisely,
\begin{customthm}{\ref{thm:RHom is algebraic}}
 Let $X\xrightarrow fS$ be a proper morphism of quasi-compact and quasi-separated schemes, and $E,F$ perfect complexes on $X$. Then the stack of families of morphisms $\SHom_{X/S}(E,F)$ is algebraic.
\end{customthm}
In particular, this shows that in this generality, $\Aut_{X/S}(E)$ is algebraic. This implies that the stack has
a presentation $P\rightarrow \Aut_{X/S}(E)$ where $P$ is a scheme, see \S 3.2. The presentation allows us to transport notions
from algebraic geometry to the study of this stack. For example, we say that the stack is smooth if
the scheme $P$ is smooth. For a more detailed discussion of how to lift central notions 
of algebraic geometry to higher stacks we refer the reader
\cite{gr}, \cite{HAG2} and \cite{stacksproject}. Note that terminology has not been settled in this subject.
What \cite{gr} call an Artin stack, is called a geometric stack in \cite{HAG2} and is referred to as
an algebraic stack in \cite{stacksproject}. 
We have decided to use the word ``algebraic'' in this work, see the start of \S 3.2 for a discussion of 
definitions. 
There are some differences in these cited references as to the 
underlying $\infty$-topos.

Section 2 of the paper starts with background information on $\infty$-categories and stacks. The main
object of study in this article, $\SHom_{X/S}(E,F)$ is
 introduced at the start of \S 3, see also
\ref{n:rshom}. After establishing that this presheaf of spaces is indeed
a stack, we proceed to equip it with a presentation and show it is algebraic. As a corollary we see that $\Aut_{X/S}(E)$ and its
classifying stack are algebraic, see \ref{cor:Aut is algebraic}
and \ref{cor:BAut is algebraic}.

In Section \ref{s:deformation theory} we study the deformation theory of the stack of automorphisms.
The deformation theory of complexes in derived categories has been studied in 
\cite{lieblich2006moduli} and \cite{huybrechts2010}. We modify the usual argument for deformations of a morphism of modules,
as presented in \cite{illusie}, to obtain obstructions and deformations.
The obstruction to lifting a point of ${\rm Aut}(E)$ lies inside $\Ext^1(E,E)$. In the case that
this obstruction vanishes, the $\infty$-group $\Aut(E)$ is smooth. Using this, we can prove a version of the Hilbert 90 Theorem for perfect complexes.

Let $X$ be a scheme, and $E,F$ be $\mathscr O_X$-modules. Let $\tau$ be a (Grothendieck) topology on $X$. We say that $F$ is a \emph{$\tau$-form of $E$}, if there exists a $\tau$-covering $U\to X$, and an isomorphism of $\mathscr O_U$-modules $E|U\cong F|U$. The classical Hilbert 90 theorem states that for a scheme $X$ and $n\ge1$, we have
$$
H^1_{\text{fppf}}(X,\GL_n)=H^1_{\text{Zar}}(X,\GL_n).
$$
That is, if $F$ is an fppf-form of $\mathscr O_X^{\oplus n}$, then it is actually a Zariski form: there exists a Zariski covering $V\to X$ and an isomorphism $F|V\cong\mathscr O_V^{\oplus n}$. In other words, $F$ is a locally free sheaf of rank $n$.

In general, the delooping $\B\Aut E$ classifies perfect complexes $F$ which are quasi-isomorphic to $E$ after a base change along an effective epimorphism $U\to X$. Since $\Aut E$ is algebraic, and thus fppf, the classifying map $X\xrightarrow{\mathbf c_E}\B\Aut E$ of $E$ is fppf, which implies that we can find an fppf covering $U\to X$ such that $E|U\simeq F|U$. Similarly, in case the $\infty$-group $\Aut E$ is smooth, then we can find a smooth covering $U\to X$ such that $E|U\simeq F|U$. This is explained in Proposition \ref{prop:BAut classifies forms} and Corollary \ref{cor:classifying map and forms}. Standard arguments now imply that any two forms must be quasi-isomorphic after some \'etale extension  (Proposition \ref{prop:etale Hilbert 90}). 

In Section \ref{s:Zariski Hilbert 90}, with a little more work, we can show that in fact they must be Zariski-locally quasi-isomorphic:
\begin{customthm}{\ref{thm:Zariski Hilbert 90}}[Hilbert 90 for perfect complexes]
 Let $X$ be a Noetherian scheme with infinite residue fields, and $E$ a perfect complex on $X$. Suppose that $F$ is an fppf-form of $E$, that is there exists an fppf covering $U\to X$ and a quasi-isomorphism $E|U\simeq F|U$. Then $F$ is a Zariski form of $E$, that is there exists a Zariski covering $V\to X$ and a quasi-isomorphism $E|V\simeq F|V$.
\end{customthm}
The argument for Zariski forms is independent of the deformation theory studied in Section \ref{s:deformation theory}.

\section*{Acknowledgments}
 We thank the referee for carefully reading our paper and making many useful comments. P\'al Zs\'amboki is partially supported by the project NKFIH K 119934. 
Ajneet Dhillon is partially supported by an NSERC grant.

\section{Background and Notation}

Let $S$ be a scheme. We denote by $\Sch_S$ the category of $S$-schemes which are quasi-compact and quasi-separated over $\Spec\mathbf Z$. We will equip this category with the fppf-topology. We denote by $\Aff_S$ the full subcategory of affine schemes with a structure morphism to $S$.

Our main object of study is the $\infty$-stack $\Perf_S$ of perfect complexes over $S$. In this section, we briefly explain the terminology we use for $\infty$-stacks, and a construction of $\Perf_S$.

When dealing with $\infty$-categorical structures, we will be using the point of view of quasi-categories. That is, an $\infty$-category is a quasi-category \cite{lurie2009higher}*{Definition 1.1.2.4}, in other words a fibrant object in the Joyal model structure on simplicial sets.

\begin{notn}\label{notn:Fun}
 Let $\Set_\Delta$ denote the category of simplicial sets.
If $x$ and $y$ are simplicial sets, we denote by $\Fun(x,y)$ the internal mapping space in $\Set_\Delta$. It is
the simplicial set whose $n$-simplices are given by
$$\Fun(x,y)_n = \Hom_{\Set_\Delta}(\Delta^n\times x, y).$$

\end{notn}

\begin{notn}

Let $x$ be a quasi-category. Then its opposite is denoted by $x^\op$. Given a morphism between quasi-categories $x\xrightarrow fy$, so that $f$ is a vertex 
of $\Fun(x,y)$,
its opposite is denoted by
$f^\op:x^\op \rightarrow y^\op$.
\end{notn}

\begin{defn}

Let $\Cat_\Delta$ denote the category of simplicial categories. Let
$$
\adjoints{\Set_\Delta}{\Cat_\Delta}{\mathfrak C}{N}
$$
denote the Quillen equivalence between the Joyal and Bergner model structures. Let $\Kan\in\Cat_\Delta$ denote the simplicial category with
\begin{enumerate}
 \item objects Kan complexes and
 \item for two Kan complexes $X,Y$ the mapping space $\Map_{\Kan}(X,Y)$ the internal mapping space $\Fun(X,Y)$.
\end{enumerate}
Then the \emph{quasi-category of spaces} is the simplicial nerve $\mathscr S:=N(\Kan)$.

\end{defn}

\begin{defn}
 Let $\Cat_\infty^\Delta$ denote the simplicial category with
\begin{itemize}
 \item objects quasi-categories, and
 \item the mapping space $\Map_{\Cat_\infty^\Delta}(C,D)$ the interior of the quasi-category that is the mapping simplicial set $\Fun(C,D)$.
\end{itemize}
Then the \emph{quasi-category of quasi-categories} is the simplicial nerve $\Cat_\infty:=N(\Cat_\infty^\Delta)$.
\end{defn}

\subsection{$\infty$-stacks}

\begin{defn}

Let $K$ be a quasi-category. In our case, this will be $N(\Sch_S)$ or $N(\Aff_S)$. From the abstract point of view, our objects of study are \emph{prestacks (presheaves of spaces) on $K$}, that is morphisms of quasi-categories $K^\op\to\mathscr S$. Via the straightening-unstraightening equivalence \cite{lurie2009higher}*{Theorem 2.2.1.2} these objects can be described as right fibrations over $K$. If a presheaf of spaces $K^\op\xrightarrow F\mathscr S$ corresponds to a right fibration $\mathscr X\xrightarrow pK$ via this equivalence, we say that \emph{$F$ is a classifying map for $p$}. The \emph{quasi-category of prestacks on $K$} is the presheaf quasi-category $\PSt(K)=\mathscr P(K)=\Fun(K^\op,\mathscr S)$.

\end{defn}

\begin{defn}

Note that if we want to consider mapping prestacks, we need to study presheaves of quasi-categories. There is a straightening-unstraightening result in this setting too \cite{lurie2009higher}*{Theorem 3.2.0.1}, thus we can view presheaves of quasi-categories on $K$ as Cartesian fibrations on $K$. Here too if a presheaf of quasi-categories $K^\op\xrightarrow F\Cat_\infty$ corresponds to a Cartesian fibration $\mathscr X\xrightarrow pK$ via this equivalence, we say that \emph{$F$ is a classifying map for $p$}.

\end{defn}

\begin{defn}\label{defn:interior}
 Let $\mathscr X\xrightarrow pK$ be a Cartesian fibration. Then its \emph{interior} $\mathscr X^\simeq\subseteq\mathscr X$ is the simplicial subset such that
 \begin{enumerate}
  \item the vertices are the same: $\mathscr X^\simeq_0=\mathscr X_0$,
  \item the edges $\mathscr X^\simeq_1\subseteq\mathscr X_1$ are exactly the $p$-Cartesian edges and
  \item for $n>1$, the $n$-simplices are the same: $\mathscr X^\simeq_n=\mathscr X_n$.
 \end{enumerate}
It is a right fibration \cite{lurie2009higher}*{Corollary 2.4.2.5}. Note that for $T\in K$ the simplicial subset of sections $\mathscr X^\simeq(T)\subseteq\mathscr X(T)$ is the largest $\infty$-subgroupoid of the $\infty$-category $\mathscr X(T)$.
\end{defn}

Now we will formulate descent in this setting.

\begin{defn}
 Let $\mathscr X\xrightarrow pK$ be an inner fibration. Then the \emph{quasi-category of sections} $\Gamma(K,\mathscr X)$
is the strict pullback
\begin{center}
	
	\begin{tikzpicture}[xscale=3,yscale=1.5]
	\node (C') at (0,1) {$\Gamma(K,\mathscr X)$};
	\node (D') at (1,1) {$\Fun(K,\mathscr X)$};
	\node (C) at (0,0) {$\{\id_K\}$};
	\node (D) at (1,0) {$\Fun(K,K)$.};
	\path[->,font=\scriptsize,>=angle 90]
	(C') edge (D')
	(C') edge (C)
	(D') edge node [right] {$p\circ$} (D)
	(C) edge (D);
	\end{tikzpicture}
	
\end{center}
We will say that a section $\sigma$, that is a $0$-simplex $\sigma\in \Gamma(K,\mathscr X)$,
is \emph{Cartesian} if $\sigma(e)$ is a $p$-Cartesian edge for each edge $e\in K_1$.

Let us denote by $\Gamma_{\Cart}(K,\mathscr X)\subset\Gamma(K,\mathscr X)$ the full $\infty$-subcategory on Cartesian sections.
\end{defn}

\begin{defn}

Let $\mathscr X\xrightarrow pK$ be a Cartesian fibration and $\Delta^\op_+\xrightarrow{k_+}K$ an augmented simplicical object in $K$. Let $k$ denote the restriction $k_+|\Delta^\op$. We say that \emph{$\mathscr X$ satisfies descent with respect to $k_+$}, if the restriction map
$$
\Gamma_{\Cart}(k_+,\mathscr X)\to\Gamma_{\Cart}(k,\mathscr X)
$$
is an equivalence.

\end{defn}

\begin{rem}
 This definition of descent is equivalent to the usual one: the Cartesian fibration $p$ has descent with respect to $k_+$ if and only if the restriction $\mathbf c_p|k_+$ of a classifying map $K^\op\xrightarrow{\mathbf c_p}\Cat_\infty$ is the limit diagram of the restriction $\mathbf c_p|k$ \cite{lurie2009higher}*{Proposition 3.3.3.1}.
\end{rem}

\begin{defn} 
Let $\tau$ be a Grothendieck topology on $K$ \cite{lurie2009higher}*{Definition 6.2.2.1}. Then we say that \emph{$\mathscr X$ satisfies $\tau$-descent}, if $\mathscr X$ satisfies descent with respect to the augmented \v Cech nerve of every $\tau$-covering.
 
\end{defn}

\begin{defn} Let $K^\op\xrightarrow F\mathscr S$ be a prestack on $K$. If it has $\tau$-descent, then we call it a \emph{$\tau$-stack}. The \emph{quasi-category of $\tau$-stacks} is the full $\infty$-subcategory $\St(K)\subseteq\PSt(K)$ on stacks. It is an $\infty$-topos \cite{lurie2009higher}*{Proposition 6.2.2.7}. In case $K=N(\Sch_S)$ we let $\St(S)=\St(N(\Sch_S))$.
\end{defn}

\subsection{The $\infty$-stack of perfect complexes}

Let $K=N(\Sch_S)$ and equip it with the fppf topology. Now we will describe a construction of the $\infty$-stack $\Perf_S$ of perfect complexes.

\begin{conv}
 In this paper we use the convention of cochain complexes. That is, for an abelian category $A$, we view a chain complex $M_\bullet$ in $A$ as the cochain complex $M^\bullet$ in $A$ by letting $M^n$ denote $M_{-n}$ and letting $d^n$ denote $d_{-n}$.
\end{conv}

\begin{rem}
 Here we discuss more formally the switch between the chain complex and the cochain complex conventions. The set $\mathbf Z$ of integers can be given both the natural ordering $\le$ and the reverse ordering $\ge$. Both these partially ordered sets induce categories: for integers $m,n\in\mathbf Z$, we have
 $$
 \Hom_{(\mathbf Z,\le)}(m,n)=\begin{cases}
                              * & m\le n\\
                              \emptyset & \text{else}
                             \end{cases}
\text{ and }\Hom_{(\mathbf Z,\ge)}(m,n)=\begin{cases}
                              * & m\ge n\\
                              \emptyset & \text{else.}
                             \end{cases}
 $$
 A chain complex in $A$ is a functor $(\mathbf Z,\le)^\op\to A$ and a cochain complex in $A$ is a functor $(\mathbf Z,\ge)^\op\to A$. To switch between the two, we can precompose with the opposite of the equivalence of categories 
 $$
 (\mathbf Z,\le)\xrightarrow{m\mapsto-m}(\mathbf Z,\ge).
 $$
\end{rem}

\begin{notn}

Let $D$ be a dg-category. Then we denote by $\sk_1D$ its underlying 1-category. 

Let $A$ be an abelian category. Then we denote by $\Ch(A)$ the dg-category of cochain complexes. That is, the objects of $\Ch(A)$ are the cochain complexes in $A$, and for two cochain complexes $M,N\in\Ch(A)$, the degree-$i$ part of the mapping complex $\UHom(M,N)$ is the collection of degree-$i$ morphisms $M\xrightarrow fN$, and for such a morphism, its differential $d_{\UHom(M,N)}f$ is $d_N\circ f-(-1)^if\circ d_M$.

Let $\Ch^-(A)\subseteq\Ch(A)$ denote the full dg-subcategory of bounded above cochain complexes in $A$. That is, we have $M\in\Ch^-(A)$ if and only if $M^i=0$ for $i\gg0$.

We denote by $A_\proj\subseteq A$ the full subcategory on projective objects.

\end{notn}

\begin{cons}

Let $\Spec R$ be an affine $S$-scheme. Then we can let the derived quasi-category be the localization $\mathscr D^-(\Spec R):=N(\sk_1\Ch^-(\Mod(R)))[\qis^{-1}]$ at the localizing class of quasi-isomorphisms. We can make the construction functorial by sending a morphism of affine $S$-schemes $\Spec R'\to\Spec R$ to the base change map $\Mod(R)\xrightarrow{\otimes_RR'}\Mod(R')$. This gives a morphism of quasi-categories $N(\Aff_S)^\op\to\WCat_\infty$ which we can compose with the functorial localization map $\WCat_\infty\xrightarrow{(\mathscr C,\qis)\mapsto C[\qis^{-1}]}\Cat_\infty$ \cite{lurie2014higher}*{Proposition 4.1.7.2}. 

We get a presheaf of quasi-categories
$$
N(\Aff_S)^\op\xrightarrow{T\mapsto\mathscr D^-(T)}\Cat_\infty.
$$
It satisfies fppf descent \cite{lurie2011descent}*{Theorem 7.5}.

To extend this to $S$-schemes, we can glue. That is, for an $S$-scheme $T$, we want
$$
\mathscr D^-(T)=\lim_{U\in N(\Aff_T)}\mathscr D^-(U).
$$
As we have $T=\colim_{U\in\Aff_T}U$, this can be done functorially in a unique up to homotopy way \cite{lurie2009higher}*{Theorem 5.1.5.6}. We let $\Perf_S\subseteq\mathscr D^-_S$ denote the full substack on perfect complexes.

\end{cons}

\begin{cau}
 Note that for $(\mathscr C,W)\in\Ob\WCat_\infty$ the localization $\mathscr C[W^{-1}]$ is a quasi-category and not a 1-category. We can get the 1-categorical localization as the homotopy category.
\end{cau}

\begin{rem}

A more concrete description of the localization $N(\Ch^-(\Mod(R)))[\qis^{-1}]$ can be obtained as a dg-nerve \cite{lurie2014higher}*{Construction 1.3.1.6}: since the category $\Mod(R)$ of $R$-modules has enough projectives, we get an equivalence of dg-categories \cite{lurie2014higher}*{Theorem 1.3.4.4}
$$
\mathscr D^-(\Spec R)=N(\Ch^-(\Mod(R)))[\qis^{-1}]\simeq N_\dg(\Ch^-(\Mod(R)_\proj)).
$$
Note in particular that the quasi-category $\Perf_S(\Spec R)$ of perfect complexes on $\Spec R$ is equivalent to the dg-nerve of the dg-category of strictly perfect complexes on $\Spec R$.

\end{rem}

\section{The stack of automorphisms of a complex.}

Fix a scheme $X$ and a  perfect complex $E$ on $X$. Pulling back along each map
\[
\Spec(R)\rightarrow X\]
we obtain a vertex in each $\msD^-(R)$ and this compatible family of vertices produces
a vertex in $\msD^-(X)$.

Let $E$, $F$ be two perfect complexes on $X$. Now we would like to describe the mapping stack $\SHom_X(E,F)$. Let $R$ be an $X$-algebra. Recall that by the Dold--Kan correspondence we have an equivalence
$$
N:\Mod(R)^{\Delta^\op}\leftrightarrows\Ch^{\le0}(\Mod R):\DK
$$
between the 1-category of simplicial $R$-modules and the 1-category of cochain complexes of $R$-modules vanishing in positive degrees. Using this, we can give an explicit description of the mapping spaces in the quasi-category $\mathscr D^-(R)$: we have \cite{lurie2014higher}*{Remark 1.3.1.12}
$$
\Map_{\mathscr D^-(R)}(E_R,F_R)\simeq\DK\tau_{\le0}\UHom(E_R,F_R)
$$
where $\UHom$ is the mapping cochain complex and $\tau_{\le0}$ is the cochain complex truncation map.

We will first give a natural construction of the mapping prestack between sections of a Cartesian fibration. Then we show that in case this Cartesian fibration is our object of study $\mathscr D^-$, the mapping spaces of sections over affine schemes can be explicitly described as above:
$$
\SHom_X(E,F)(\Spec R)\simeq\DK\tau_{\le0}\UHom(E_R,F_R).
$$


\subsection{Mapping spaces and automorphism groups in an $\infty$-topos}

Our original objects of study are forms of a perfect complex $E$ on an $S$-scheme $X$. Recall that if $E$ is a coherent sheaf, then forms of $E$ are classified by the gerbe $\B\Aut E$ where $\Aut E$ is the automorphism sheaf of $E$.

We can use the same approach in the higher categorical setting: The perfect complex $E$ is described by a classifying map $X\xrightarrow{\mathbf c_E}\Perf_X$. Just like the automorphism sheaf of a section of a 1-stack, the automorphism group of a section of an $\infty$-stack can be defined by the loop group construction:

\begin{defn}\label{d:loop} Let $\mathscr C$ be an $\infty$-topos. Let $*\xrightarrow cC$ be a pointed object in $\mathscr C$. Then the \emph{automorphism group $\Aut(c)_\bullet=\Aut_{C}(c)_\bullet$ of $c$} is the loop group $\Omega(c)_\bullet\in\Grp(\mathscr C)$ \cite{lurie2009higher}*{Lemma 7.2.2.11}. In particular, the underlying object $\Omega(c)=\Omega(c)_1$ fits in a homotopy pullback diagram
\begin{center}
	
	\begin{tikzpicture}[xscale=1.5,yscale=1.5]
	\node (C') at (0,1) {$\Omega(c)$};
	\node (D') at (1,1) {$*$};
	\node (C) at (0,0) {$*$};
	\node (D) at (1,0) {$\mathscr C.$};
	\path[->,font=\scriptsize,>=angle 90]
	(C') edge (D')
	(C') edge (C)
	(D') edge node [right] {$c$} (D)
	(C) edge node [above] {$c$} (D);
	\end{tikzpicture}
	
\end{center}
\end{defn}
$\infty$-groups in a quasi-category are described as simplicial objects $\Delta^\op\to\mathscr C$. Since $\mathscr C$ is an $\infty$-topos, the loop group $\Omega(c)_\bullet$ admits a homotopy colimit $\B\Omega(c)$. We will show that in case $\mathscr C=\St(S)$, $C$ is the interior $\Perf_X^\simeq\subseteq\Perf_X$ and $c=\mathbf c_E$, the delooping $\B\Aut(E)$ is an fppf stack. This shows that it classifies fppf forms of $E$.

For this, we will give another description of the underlying object $\Aut(E)$: it is the full substack of $\End(E)=\SHom(E,E)$ on self-equivalences $E\to E$. Using this and the local description using the Dold--Kan equivalence, we will be able to describe the local structure of $\B\Aut(E)$ explicitly.

We will start by defining mapping stacks in the language of Cartesian fibrations. We show that the mapping stack inherits descent properties from the original fibration.
We conclude the subsection by proving equivalent the two potential definitions of 
automorphism stack.

\begin{cons} Let $\mathscr C$ be a quasi-category with a final object $S$ and $\mathscr X\xrightarrow p\mathscr C$ a Cartesian fibration. Let $x,y\in\mathscr X(S)$ be two objects over $S$. By the quasi-categorical version of Quillen's Theorem A \cite{lurie2009higher}*{Theorem 4.1.3.1}, the inclusion map $\{S\}\to\mathscr C$ is cofinal, thus the restriction map
$$
\Gamma(\mathscr C,\mathscr X^\simeq)\to\Gamma(\{S\},\mathscr X^\simeq)=\mathscr X^\simeq(S)
$$
is a homotopy equivalence. Therefore, there exists a Cartesian section $\mathscr C\xrightarrow{y_\bullet}\mathscr X$ with $y_S=y$. Then the \emph{mapping prestack} can be formed as the strict fibre product
\begin{center}
	
	\begin{tikzpicture}[xscale=3,yscale=1.5]
	\node (C') at (0,1) {$\SMap(x,y)$};
	\node (D') at (1,1) {$\mathscr X^{/y}$};
	\node (C) at (0,0) {$\mathscr C$};
	\node (D) at (1,0) {$\mathscr X.$};
	\path[->,font=\scriptsize,>=angle 90]
	(C') edge (D')
	(C') edge (C)
	(D') edge (D)
	(C) edge node [above] {$y_\bullet$} (D);
	\end{tikzpicture}
	
\end{center}
 
\end{cons}

Let us now make our construction functorial in $(x,y)\in\mathscr X^\op(S)\times\mathscr X(S)$, that is construct a Cartesian fibration $\SMap\to\mathscr X(S)\times\mathscr C\times\mathscr X^\op(S)$ such that the pullback along the inclusion map $\{x\}\times\mathscr C\times\{y\}$ is $\SMap(x,y)$.

\begin{defn} To construct $\SMap$, we will make use of the twisted arrow construction \cite{lurie2014higher}*{Construction 5.2.1.1}, which we now briefly recall.

Let $I$ be a linearly ordered set. Then we denote by $I^\op$ the same set with the opposite ordering. Let $J$ be another linearly ordered set. Then we denote by $I\star J$ the set $I\sqcup J$ where the subsets $I$ and $J$ are equipped with their original ordering, and we have $i\le j$ for all $i\in I$ and $j\in J$. We can then define a functor
$$
\Delta\xrightarrow{Q(I)=I\star I^\op}\Delta.
$$
Let $\mathscr X$ be a quasi-category. Then the \emph{twisted arrow category} $\TwArr\mathscr X$ is the composite
$$
\Delta^\op\xrightarrow{Q^\op}\Delta^\op\xrightarrow{\mathscr X}\Set.
$$
By construction, its $n$-simplices are diagrams in $\mathscr X$ of the form
\[\begin{tikzcd}
	{X_0} & {X_1} & \dotsb & {X_n} \\
	\\
	{Y_0} & {Y_1} & \dotsb & {Y_n}
	\arrow[from=1-1, to=1-2]
	\arrow[from=1-2, to=1-3]
	\arrow[from=1-3, to=1-4]
	\arrow[from=1-4, to=3-4]
	\arrow[from=3-4, to=3-3]
	\arrow[from=3-3, to=3-2]
	\arrow[from=3-2, to=3-1]
	\arrow[from=1-1, to=3-1]
	\arrow[from=1-2, to=3-2]
\end{tikzcd}\]
Restricting along inclusion maps $I\to I\star I^\op\leftarrow I^\op$ induces a map $\TwArr\mathscr X\xrightarrow\lambda\mathscr X\times\mathscr X^\op$. This map is a right fibration \cite{lurie2014higher}*{Proposition 5.2.1.3}. It is classified by a presheaf $\mathscr X^\op\times\mathscr X\to\mathscr S$ which in turn induces the Yoneda embedding $\mathscr X\to\mathscr P(\mathscr X)$ \cite{lurie2014higher}*{Proposition 5.2.1.11}.

\end{defn}

\begin{cons} Since the inclusion $\{S\}\to\mathscr C$ is cofinal, the map $\{S\}\xrightarrow i\mathscr C^\sharp$ is a trivial Cartesian cofibration by Lemma \ref{lem:cofinal inclusion is trivial Cartesian cofibration}. Thus the base change $\mathscr X^\flat(S)\times\{S\}\xrightarrow{\mathscr X^\flat(S)\times i}\mathscr X^\flat\times\mathscr C$ is a trivial Cartesian cofibration too \cite{lurie2009higher}*{Corollary 3.1.4.3}. Therefore, the lifting problem
\[\begin{tikzcd}
	{\mathscr X^\flat(S)\times\{S\}} && {\mathscr X^\natural} \\
	\\
	{\mathscr X^\flat(S)\times\mathscr C^\sharp} && {\mathscr C^\sharp}
	\arrow["p"', from=1-3, to=3-3]
	\arrow[hook, from=1-1, to=1-3]
	\arrow["{\mathscr X^\flat(S)\times i}"', hook, from=1-1, to=3-1]
	\arrow["\pi", from=3-1, to=3-3]
	\arrow["C", dashed, from=3-1, to=1-3]
\end{tikzcd}\]
has a solution $\mathscr X^\flat(S)\times\mathscr C^\sharp\xrightarrow{C}\mathscr X^\natural$. That is, taking Cartesian resolutions can be done in a natural way.

Let $\mathscr X^\flat(S)\xrightarrow j\mathscr X$ denote the inclusion. Now we can take the strict fibre product
\begin{center}
	
	\begin{tikzpicture}[xscale=4,yscale=1.5]
	\node (C') at (0,1) {$\SMap$};
	\node (D') at (1,1) {$\TwArr(\mathscr X)$};
	\node (C) at (0,0) {$\mathscr X^\flat(S)\times\mathscr C\times\mathscr X^\flat(S)^\op$};
	\node (D) at (1,0) {$\mathscr X\times\mathscr X^\op.$};
	\path[->,font=\scriptsize,>=angle 90]
	(C') edge (D')
	(C') edge (C)
	(D') edge node [right] {$\lambda$} (D)
	(C) edge node [above] {$C\times j^\op$} (D);
	\end{tikzpicture}
	
\end{center}
Since $\lambda$ is a right fibration, its pullback $\lambda|C\times j$ is a right fibration too. Note that the fibre of $\SMap$ along the inclusion $\{x\}\times\mathscr C\times\{y\}\to\mathscr X^\flat(S)\times\mathscr C\times\mathscr X^\flat(S)^\op$ is equivalent to $\SMap(x,y)$ indeed.

\end{cons}

\begin{lem}\label{lem:cofinal inclusion is trivial Cartesian cofibration} Let $K$ be a simplicial set. Let $L'\xrightarrow iL$ be an inclusion of simplicial sets over $K$. Suppose that it is cofinal. Then the inclusion of marked simplicial sets $L^\sharp\xrightarrow{i^\sharp}(L')^\sharp$ is a trivial Cartesian cofibration over $K$.

\end{lem}

\begin{proof} It suffices to show that for all Cartesian fibrations $Y\to K$, the precomposition map
$$
\Map_K(L,Y^\simeq)=\Map^\sharp_K(L^\sharp,Y^\natural)
\xrightarrow{\circ i}\Map^\sharp_K((L')^\sharp,Y^\natural)=\Map_K(L',Y^\simeq)
$$
is a homotopy equivalence \cite{lurie2009higher}*{Proposition 3.1.3.3}. But this follows from that the map $i$ is cofinal \cite{lurie2009higher}*{Definition 4.1.1.1}.

\end{proof}

\begin{notn} In the case of main interest $\mathscr C=\Sch_X$ and $\mathscr X=\mathscr D^-$ we let $\SHom_X(E,F)=\SMap_{\mathscr D^-}(E,F)$.

\end{notn}

\begin{prop}\label{prop:Map is a stack} Let $\mathscr C$ be a quasi-category with a final object $S$, and $\mathscr X\xrightarrow p\mathscr C$ a Cartesian fibration. Let $U\xrightarrow gT$ be a morphism in $\mathscr C$. Suppose that $\mathscr X$ satisfies descent with respect to $g$. Let $x,y\in\mathscr X(S)$. Then the right fibration $\SMap(x,y)\to\mathscr C$ satisfies descent with respect to $g$ too.
	
\end{prop}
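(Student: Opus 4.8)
The plan is to reduce descent for $\SMap(x,y)\to\mathscr C$ to descent for $\mathscr X\to\mathscr C$, using that mapping spaces commute with limits of $\infty$-categories. Write $g\colon U\to T$, let $U_\bullet=\check C(g)$ be the \v Cech nerve and $k_+\colon\Delta_+^{\op}\to\mathscr C$ its augmentation, so $k_+([-1])=T$ and $k_+([n])=U_{n}$; ``satisfies descent with respect to $g$'' means ``with respect to $k_+$''. First I would note that $\SMap(x,y)\to\mathscr C$ is a right fibration: it is the base change of the right fibration $\mathscr X^{/y}\to\mathscr X$ along $y_\bullet$ (equivalently, the fibre over $\{y\}\times\mathscr C\times\{x\}$ of the right fibration $\SMap\to\mathscr X\times\mathscr C\times\mathscr X^{\op}$ constructed above), so it is classified by a presheaf of spaces $\mathcal F\colon\mathscr C^{\op}\to\mathscr S$. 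Since $p$ is a Cartesian fibration, $\Map_{\mathscr X}(a,b)$ fibres over $\Map_{\mathscr C}(p(a),p(b))$ with fibre $\Map_{\mathscr X(p(a))}(a,f^{*}b)$ over $f$, and $\Map_{\mathscr C}(W,S)\simeq *$ because $S$ is final; together with the coherent resolution functor $C$ of the previous paragraph this identifies $\mathcal F$, naturally in $W$, with $W\mapsto\Map_{\mathscr X(W)}(x_W,y_W)$, where $x_W,y_W$ are the restrictions of $x,y$. Applying the discussion of $\Gamma_{\Cart}$ and descent recalled above to the right fibration $\SMap(x,y)\to\mathscr C$ — i.e.\ $\Gamma_{\Cart}(k_+,-)\simeq\mathcal F(T)$ by \cite{dhillon2018stack}*{Lemma 2.5} (as $[-1]$ is terminal in $\Delta_+^{\op}$) and $\Gamma_{\Cart}(k,-)\simeq\lim_{[n]\in\Delta}\mathcal F(U_n)$ — descent for $\SMap(x,y)$ with respect to $g$ becomes the single assertion that the canonical map
$$
\mathcal F(T)\ \longrightarrow\ \lim_{[n]\in\Delta}\mathcal F(U_n)
$$
is an equivalence of spaces.

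Next I would feed in the hypothesis. Descent for $\mathscr X$ with respect to $g$ gives an equivalence $\mathscr X(T)\xrightarrow{\ \sim\ }\lim_{[n]\in\Delta}\mathscr X(U_n)$ in $\Cat_\infty$, under which $x_T$ and $y_T$ correspond to the compatible families $(x_{U_n})_n$ and $(y_{U_n})_n$, since $x_\bullet,y_\bullet$ are Cartesian sections. Hence it suffices to prove the general fact: if $\mathscr D^\bullet\colon\Delta\to\Cat_\infty$ is a cosimplicial $\infty$-category with limit $\mathscr D$, and $a=(a_n)_n$, $b=(b_n)_n$ are compatible families of objects with $a_n,b_n\in\mathscr D^n$, then $\Map_{\mathscr D}(a,b)\to\lim_{[n]\in\Delta}\Map_{\mathscr D^n}(a_n,b_n)$ is an equivalence. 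For this, write each mapping space via a slice model, $\Map_{\mathscr D^n}(a_n,b_n)\simeq\{a_n\}\times_{\mathscr D^n}(\mathscr D^n)^{/b_n}$, and use that $\Fun(\Delta^1,-)\colon\Cat_\infty\to\Cat_\infty$ preserves limits (it is right adjoint to $-\times\Delta^1$), that $\mathscr S\hookrightarrow\Cat_\infty$ preserves limits (it is right adjoint to the groupoid-completion functor), and that limits commute with limits; since $(a,b)$ is literally an object of $\mathscr D=\lim_{[n]}\mathscr D^n$ one gets $\lim_{[n]}\{a_n\}\simeq\{a\}$ and $\lim_{[n]}(\mathscr D^n)^{/b_n}\simeq\mathscr D^{/b}$, whence $\lim_{[n]}\Map_{\mathscr D^n}(a_n,b_n)\simeq\{a\}\times_{\mathscr D}\mathscr D^{/b}\simeq\Map_{\mathscr D}(a,b)$.

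Combining the two steps with $\mathscr D^n=\mathscr X(U_n)$, $\mathscr D=\mathscr X(T)$, $a=x$, $b=y$ shows that $\mathcal F(T)=\Map_{\mathscr X(T)}(x_T,y_T)\to\lim_{[n]\in\Delta}\Map_{\mathscr X(U_n)}(x_{U_n},y_{U_n})=\lim_{[n]\in\Delta}\mathcal F(U_n)$ is an equivalence, which is exactly descent for $\SMap(x,y)$ with respect to $g$. The only non-formal ingredient is that mapping spaces commute with limits of $\infty$-categories; the rest is the bookkeeping with $\Gamma_{\Cart}$ already carried out in the paper, together with the standard fact that the slice $\mathscr X^{/y}\to\mathscr X$ is a right fibration. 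I expect the fiddliest point to be keeping the restrictions $x_W,y_W$ coherent in $W$, which is precisely the purpose of the resolution functor $C$ introduced just before the proposition.
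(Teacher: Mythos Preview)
Your argument is correct and takes a genuinely different route from the paper's own proof. You straighten the right fibration $\SMap(x,y)\to\mathscr C$ to a presheaf $\mathcal F$, identify $\mathcal F(W)\simeq\Map_{\mathscr X(W)}(x_W,y_W)$, and then reduce everything to the single categorical fact that mapping spaces commute with limits in $\Cat_\infty$; your slice-model verification of that fact is fine. The paper instead stays entirely on the fibration side: after reducing to $T=S$, it manipulates strict fibre products and slice categories $\Gamma_{\Cart}(U_\bullet^+,\mathscr X)^{/y}$, $\Gamma_{\Cart}(U_\bullet,\mathscr X)^{/y}$ directly, showing via a chain of trivial fibrations (and the auxiliary Lemma on restriction maps being inner/Kan fibrations) that the relevant restriction map is a trivial fibration, not merely an equivalence.

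What each approach buys: your argument is shorter, more conceptual, and makes transparent why the result is essentially formal once descent for $\mathscr X$ is known---it isolates the one non-tautological input (mapping spaces commute with limits of $\infty$-categories) cleanly. The paper's argument is more self-contained at the simplicial level: it does not invoke straightening or the limits-of-mapping-spaces fact as black boxes, and it yields the slightly sharper conclusion that the descent comparison map is a trivial Kan fibration rather than just a weak equivalence. The coherence bookkeeping you flag as the fiddliest point---keeping $x_W,y_W$ functorial in $W$---is handled in both proofs by the Cartesian resolution functor $C$, so there is no hidden difficulty there.
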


\begin{cor} \label{c:isStack}
	Let $X\xrightarrow fS$ be a morphism of schemes. Let $E,F\in\mathscr D(X)$. Then $\SHom(E,F)_X$ is a stack. Thus so is the pushforward $f_*\SHom(E,F)_X$. Note that the latter classifies families of maps: a section over an $S$-scheme $T$ classifies a map $E_{X_T}\to F_{X_T}$.
	
\end{cor}

\begin{proof}[Proof of Proposition \ref{prop:Map is a stack}] Since the construction of $\SMap(x,y)$ is natural, we can assume $T=S$. Let $U_\bullet=k$ denote the \v Cech nerve of $g$, and $U_\bullet^+=\Bar k$ the augmented \v Cech nerve. Let $x_\bullet$ resp.~$y_\bullet$ denote Cartesian resolutions of $x$ resp.~$y$. Let $U_\bullet\xrightarrow\varepsilon S$ denote the map in $\mathscr C^{\Delta^\op}$ induced by $U_\bullet^+$, and similarly for the map $U_\bullet^+\xrightarrow{\varepsilon^+}S$. The Cartesian resolution $y_\bullet$ induces morphisms $y_\bullet|k\xrightarrow{\varepsilon_y}y$ in $\mathscr X^{\Delta^\op}$ and $y_\bullet|\Bar k\xrightarrow{\varepsilon_y^+}y$ in $\mathscr X^{\Delta_+^\op}$. Since for each $n\in\Delta^\op$ the edge $y_n\xrightarrow{\varepsilon_y(n)}y$ is $p$-Cartesian, the edge $\varepsilon_y$ is $p^{\Delta^\op}$-Cartesian \cite{lurie2009higher}*{Proposition 3.1.2.1}. Therefore, the bottom horizontal map in the strict fibre product
	\begin{center}
		
		\begin{tikzpicture}[xscale=4,yscale=1.5]
		\node (C') at (0,1) {$\Gamma(U_\bullet,\mathscr X)^{/\varepsilon_y}$};
		\node (D') at (1,1) {$\Gamma(U_\bullet,\mathscr X)^{/ y}$};
		\node (C) at (0,0) {$(\mathscr X^{\Delta^\op})^{/\varepsilon_y}$};
		\node (D) at (1,0) {$(\mathscr X^{\Delta^\op})^{/y}\times_{(\mathscr C^{\Delta^\op})^{/S}}(\mathscr C^{\Delta^\op})^{/\varepsilon}$.};
		\path[->,font=\scriptsize,>=angle 90]
		(C') edge node [above] {$\phi$} (D')
		(C') edge (C)
		(D') edge node [right] {$(\mathrm{incl},*_\varepsilon)$} (D)
		(C) edge (D);
		\end{tikzpicture}
		
	\end{center}
	is a trivial fibration, and thus so is $\phi$. Since $\phi$ commutes with the restriction maps to $\Gamma(U_\bullet,\mathscr X)$, its restriction $\Gamma_{\Cart}(U_\bullet,\mathscr X)^{/\varepsilon_y}\xrightarrow\phi\Gamma_{\Cart}(U_\bullet,\mathscr X)^{/ y}$ is also a trivial fibration. The restriction map $\Gamma_{\Cart}(U_\bullet,\mathscr X)^{/\varepsilon_y}\to\Gamma_{\Cart}(U_\bullet,\mathscr X)^{/y_\bullet|k}$ is also a trivial fibration, as we can postcompose. Therefore, since we do the same for the augmented simplicial diagrams, we get a strict commutative diagram
	\begin{center}
		
		\begin{tikzpicture}[xscale=4,yscale=1.5]
		\node (B') at (-1,1) {$\Gamma_{\Cart}(U_\bullet^+,\mathscr X)^{/ y_\bullet|\Bar k}$};
		\node (C') at (0,1) {$\Gamma_{\Cart}(U_\bullet^+,\mathscr X)^{/\varepsilon_y^+}$};
		\node (D') at (1,1) {$\Gamma_{\Cart}(U_\bullet^+,\mathscr X)^{/ y}$};
		\node (B) at (-1,0) {$\Gamma_{\Cart}(U_\bullet,\mathscr X)^{/ y_\bullet|k}$};
		\node (C) at (0,0) {$\Gamma_{\Cart}(U_\bullet,\mathscr X)^{/\varepsilon_y}$};
		\node (D) at (1,0) {$\Gamma_{\Cart}(U_\bullet,\mathscr X)^{/ y}$};
		\path[->,font=\scriptsize,>=angle 90]
		(C') edge (B')
		(C') edge (D')
		(B') edge (B)
		(C') edge (C)
		(D') edge node [right] {$\psi$} (D)
		(C) edge (B)
		(C) edge (D);
		\end{tikzpicture}
		
	\end{center}
	where all the horizontal arrows are trivial fibrations. Since $\mathscr X$ satisfies descent with respect to $g$, the left vertical arrow is a trivial fibration too. Therefore, by the 2-out-of-3 property, the other two vertical arrows are categorical equivalences. Since the vertical arrows are right fibrations by Lemma \ref{lem:diagram restriction inner fibration}, they are Cartesian fibrations \cite{lurie2009higher}*{Proposition 2.4.2.4}. Therefore, they are trivial fibrations \cite{lurie2009higher}*{Corollary 2.4.4.6}.
	
	By construction, we have a strict fibre product diagram
	\begin{center}
		
		\begin{tikzpicture}[xscale=4,yscale=1.5]
		\node (C') at (0,1) {$\Gamma(U_\bullet,\SMap(x,y))$};
		\node (D') at (1,1) {$\Gamma(U_\bullet,\mathscr X)^{/y}$};
		\node (C) at (0,0) {$\Delta^0$};
		\node (D) at (1,0) {$\Gamma(U_\bullet,\mathscr X)$.};
		\path[->,font=\scriptsize,>=angle 90]
		(C') edge (D')
		(C') edge (C)
		(D') edge (D)
		(C) edge node [above] {$x_\bullet|k$} (D);
		\end{tikzpicture}
		
	\end{center}
	
	Therefore, we get a diagram with the square a strict fibre product
	\begin{center}
		
		\begin{tikzpicture}[xscale=4,yscale=1.5]
		\node (C'') at (0,2) {$\Gamma(U_\bullet^+,\SMap(x,y))$};
		\node (C') at (0,1) {$Z$};
		\node (D') at (1,1) {$\Gamma_{\Cart}(U_\bullet^+,\mathscr X)^{/y}$};
		\node (C) at (0,0) {$\Gamma(U_\bullet,\SMap(x,y))$};
		\node (D) at (1,0) {$\Gamma_{\Cart}(U_\bullet,\mathscr X)^{/y}$.};
		\path[->,font=\scriptsize,>=angle 90]
		(C'') edge node [right] {$\iota$} (C')
		(C') edge (D')
		(C') edge node [right] {$\psi$} (C)
		(D') edge node [right] {$\psi$} (D)
		(C) edge node [above] {$x_\bullet|k$} (D);
		\end{tikzpicture}
		
	\end{center}
	
	Since $\psi$ is a trivial fibration, $Z$ is a Kan complex. Therefore, $\iota$ is the inclusion map of a full Kan subcomplex of a Kan complex, the image of which intersects every connected component of $Z$. Therefore, it is a weak equivalence. This shows that $\psi\iota$ is a weak equivalence. But $\psi\iota$ is also a Kan fibration by Lemma \ref{lem:diagram restriction inner fibration}. Therefore, it is a trivial fibration as required.
	
\end{proof}

\begin{lem}\label{lem:diagram restriction inner fibration} Let $L'\subseteq L$ be simplicial sets, and $\mathscr X\xrightarrow p L$ an inner fibration. Then the restriction maps $\Gamma(L,\mathscr X)\to\Gamma(L',\mathscr X)$ and $\Gamma_{\Cart}(L,\mathscr X)\to\Gamma_{\Cart}(L',\mathscr X)$ are inner fibrations. If $p$ is a right fibration, then the restriction map is a Kan fibration.
	
\end{lem}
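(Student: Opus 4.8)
The plan is to deduce all three assertions from how the section construction interacts with pushout-products of anodyne maps and monomorphisms. For $K\subseteq L$, the $n$-simplices of $\Gamma(K,\mathscr X)$ are the maps $K\times\Delta^n\to\mathscr X$ whose composite with $p$ is the projection $K\times\Delta^n\to K\hookrightarrow L$. Consequently, for a monomorphism $A\hookrightarrow B$ of simplicial sets, a lifting problem for $\Gamma(L,\mathscr X)\to\Gamma(L',\mathscr X)$ given by maps $A\to\Gamma(L,\mathscr X)$ and $B\to\Gamma(L',\mathscr X)$ compatible over $\Gamma(L',\mathscr X)$ transposes precisely to the datum of a map over $L$
\[
(L\times A)\cup_{L'\times A}(L'\times B)\longrightarrow\mathscr X
\]
together with the demand to extend it to a map $L\times B\to\mathscr X$ over $L$ along the inclusion $(L\times A)\cup_{L'\times A}(L'\times B)\hookrightarrow L\times B$, which is the pushout-product of $L'\hookrightarrow L$ and $A\hookrightarrow B$.

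First I would treat $\Gamma(L,\mathscr X)\to\Gamma(L',\mathscr X)$. If $A\hookrightarrow B$ is inner anodyne, then its pushout-product with the monomorphism $L'\hookrightarrow L$ is inner anodyne \cite{lurie2009higher}*{Corollary 2.3.2.4}, so the required extension exists because $p$ is an inner fibration; hence $\Gamma(L,\mathscr X)\to\Gamma(L',\mathscr X)$ is an inner fibration. (Equivalently, this follows from \cite{lurie2009higher}*{Corollary 2.3.2.5} applied to $p$ and $L'\hookrightarrow L$, once one checks that $\Gamma(L,\mathscr X)\to\Gamma(L',\mathscr X)$ is a pullback of the resulting map along the map $\Gamma(L',\mathscr X)\to\Fun(L',\mathscr X)\times_{\Fun(L',L)}\Fun(L,L)$ sending a section $s$ to $(s,\id_L)$.) If moreover $p$ is a right fibration, then running the same argument with $A\hookrightarrow B$ right anodyne (and using that the pushout-product of a right anodyne map with a monomorphism is again right anodyne, the dual of \cite{lurie2009higher}*{Corollary 2.1.2.7}) shows that $\Gamma(L,\mathscr X)\to\Gamma(L',\mathscr X)$ is a right fibration.

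For the Cartesian sections I would reduce to the previous step. By definition $\Gamma_{\Cart}(L,\mathscr X)$ is the full simplicial subset of $\Gamma(L,\mathscr X)$ spanned by the sections carrying every edge of $L$ to a $p$-Cartesian edge, and similarly over $L'$; plainly the restriction of a Cartesian section over $L$ is a Cartesian section over $L'$. Then I would apply the elementary observation that, given an inner fibration $f\colon C\to D$ and full simplicial subsets $C_0\subseteq C$, $D_0\subseteq D$ with $f(C_0)\subseteq D_0$, the restriction $C_0\to D_0$ is again an inner fibration: an inner horn $\Lambda^n_j\hookrightarrow\Delta^n$ (with $0<j<n$, so $n\ge2$) contains every vertex of $\Delta^n$, so any filler $\Delta^n\to C$ produced from a lifting problem whose initial data lie in $C_0$ has all of its vertices in $C_0$ and hence lands in $C_0$ by fullness. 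Applying this to $\Gamma(L,\mathscr X)\to\Gamma(L',\mathscr X)$ yields that $\Gamma_{\Cart}(L,\mathscr X)\to\Gamma_{\Cart}(L',\mathscr X)$ is an inner fibration.

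It remains, when $p$ is a right fibration, to strengthen ``right fibration'' to ``Kan fibration''. Taking $L'=\varnothing$ in the argument of the second paragraph shows that $\Gamma(K,\mathscr X)\to\Gamma(\varnothing,\mathscr X)=\Delta^0$ is a right fibration for $K=L$ and for $K=L'$, so $\Gamma(L,\mathscr X)$ and $\Gamma(L',\mathscr X)$ are Kan complexes. We have already shown that $\Gamma(L,\mathscr X)\to\Gamma(L',\mathscr X)$ is a right fibration, and a right fibration whose target is a Kan complex is a Kan fibration (by \cite{lurie2009higher}*{Lemma 2.1.3.3} applied to the opposite fibration); this completes the argument. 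The one part that does not fall out of the purely formal pushout-product bookkeeping is precisely this last step: a mere right fibration need not have the right lifting property against the outer horns $\Lambda^n_0\hookrightarrow\Delta^n$, so one really must use that the base $\Gamma(L',\mathscr X)$ is a Kan complex, which is why I would first establish Kan-ness of the section spaces.
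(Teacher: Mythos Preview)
Your argument is correct and, for the inner-fibration assertions, is the same as the paper's: transpose the lifting problem so that lifting $A\hookrightarrow B$ against the restriction map becomes lifting the pushout-product $(L\times A)\cup_{L'\times A}(L'\times B)\hookrightarrow L\times B$ against $p$, then cite \cite{lurie2009higher}*{Corollary 2.3.2.4}. The paper handles the Cartesian-section case by the same full-subcategory observation you spell out explicitly.

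Where you differ is the Kan-fibration claim. The paper writes only ``we can use a similar argument for $n\ge0$ and $0\le d\le n$,'' but taken literally this does not go through: the pushout-product of $L'\hookrightarrow L$ with the outer horn $\Lambda^n_0\hookrightarrow\Delta^n$ is merely left anodyne, and a right fibration $p$ has no reason to lift against left anodyne maps. Your detour---first show the restriction is a right fibration and that both $\Gamma(L,\mathscr X)$ and $\Gamma(L',\mathscr X)$ are Kan complexes (by taking $L'=\varnothing$), then invoke \cite{lurie2009higher}*{Lemma 2.1.3.3} to conclude that a right fibration over a Kan complex is a Kan fibration---actually supplies the missing step. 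So your treatment of this last point is a genuine tightening of the paper's sketch.
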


\begin{proof} Let $n\ge2$ and $0<i<n$. Then a lifting problem
	\begin{center}
		
		\begin{tikzpicture}[xscale=3,yscale=1.5]
		\node (C') at (0,1) {$\Lambda^n_d$};
		\node (D') at (1,1) {$\Gamma(L,\mathscr X)$};
		\node (C) at (0,0) {$\Delta^n$};
		\node (D) at (1,0) {$\Gamma(L',\mathscr X)$.};
		\path[->,font=\scriptsize,>=angle 90]
		(C') edge (D')
		(C') edge (C)
		(D') edge (D)
		(C) edge (D);
		\end{tikzpicture}
		
	\end{center}
	is the same as a lifting problem
	\begin{center}
		
		\begin{tikzpicture}[xscale=3,yscale=1.5]
		\node (C') at (0,1) {$(\Lambda^n_d\times L)\times_{\Lambda^n_d\times L'}(\Delta^n\times L')$};
		\node (D') at (1,1) {$\mathscr X$};
		\node (C) at (0,0) {$\Delta^n\times L$};
		\node (D) at (1,0) {$L$.};
		\path[->,font=\scriptsize,>=angle 90]
		(C') edge (D')
		(C') edge (C)
		(D') edge (D)
		(C) edge (D);
		\end{tikzpicture}
		
	\end{center}
	Thus, it is enough to show that the inclusion map $(\Lambda^n_d\times L)\times_{\Lambda^n_d\times L'}(\Delta^n\times L')\subseteq L\times\Delta^n$ is inner anodyne. This is implied by that $\Lambda^n_d\subset\Delta^n$ is inner anodyne \cite{lurie2009higher}*{Corollary 2.3.2.4}. As the subcategories on Cartesian sections are full subcategories, the same argument goes through for them. In case $p$ is a right fibration, we can use a similar argument for $n\ge0$ and $0\le d\le n$.
	
\end{proof}

We will now show that our two potentially different notions of automorphism group are indeed the same.
The proof will make use of the contravariant model structure on $\Set_\Delta/S$,
see \cite{lurie2009higher}*{2.1.4.12 and 2.2.1.2 }.

\begin{prop}\label{prop:Aut in Map} Let $X$ be a quasi-category. Let $\mathscr X\xrightarrow pX$ be a Cartesian fibration. Recall from Definition \ref{defn:interior} that $\mathscr X^\simeq\subseteq\mathscr X$ is the largest simplicial subset with all arrows $p$-Cartesian and the restriction $\mathscr X^\simeq\xrightarrow{p|\mathscr X^\simeq}X$ is a right fibration. Let $X\xrightarrow x\mathscr X^\simeq$ be a pointed object in the $\infty$-topos of right fibrations on $X$. Let $\Aut(x)'\subseteq\Map(x,x)$ denote the full substack on equivalences. Then we have a natural equivalence $\Aut(x)\simeq\Aut(x)'$.
	
\end{prop}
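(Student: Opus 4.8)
The plan is to construct explicit strict models, as right fibrations over $X$, for both $\Aut(x)$ and $\Aut(x)'$, to exhibit a natural comparison between them coming from the inclusion $\mathscr X^\circ\hookrightarrow\mathscr X$, and then to check that this comparison is an equivalence by passing to the fibres over the vertices of $X$, where the assertion reduces to a standard property of the maximal Kan complex of a quasi-category.

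To set up, I would first recall that $\mathscr X^\circ\to X$ is a right fibration, a standard fact about Cartesian fibrations \cite{lurie2009higher}, and observe that a section of it is exactly a Cartesian section of $\mathscr X$; hence the $\SMap$--construction of Proposition \ref{prop:Map is a stack} applies to the pointed object $x$, and $\Map(x,x)=\SMap(x,x)$ is a right fibration over $X$ whose fibre over a vertex $s\in X$ is the mapping space $\Map_{\mathscr X_s}(x_s,x_s)$ in the fibre quasi-category $\mathscr X_s$, where $x_s$ denotes the value of the section $x$ at $s$. On the other hand, $\Aut(x)=\Omega_x\mathscr X^\circ$ is by Definition \ref{d:loop} the loop object of the pointed object $(\mathscr X^\circ,x)$ in the $\infty$-topos of right fibrations on $X$, that is, the pullback $X\times_{\mathscr X^\circ}X$ of $x$ along itself; since finite limits in a presheaf $\infty$-topos are computed objectwise, its fibre over $s$ is the based loop space $\Omega_{x_s}(\mathscr X_s^\sim)$, where $\mathscr X_s^\sim=(\mathscr X^\circ)_s\subseteq\mathscr X_s$ is the maximal Kan complex.

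Next I would produce a model for $\Aut(x)$ in the same idiom as $\SMap$, by running that construction on the right fibration $\mathscr X^\circ$ itself: let $\SMap_{\mathscr X^\circ}(x,x)$ denote the strict fibre product of $x\colon X\to\mathscr X^\circ$ with the projection $(\mathscr X^\circ)^{/x_\bullet}\to\mathscr X^\circ$ from the slice of $\mathscr X^\circ$ over the section $x$, formed fibrewise over $X$ as in the construction preceding Proposition \ref{prop:Map is a stack}. Because $\mathscr X^\circ\to X$ is a right fibration, this relative slice is again a right fibration over $X$, and over each $s$ its fibre $(\mathscr X_s^\sim)^{/x_s}$ has a terminal object, hence is contractible; so $(\mathscr X^\circ)^{/x_\bullet}\to X$ is a trivial fibration and $(\mathscr X^\circ)^{/x_\bullet}\to\mathscr X^\circ$ is a fibrant replacement of $x$ in the contravariant model structure on $\Set_\Delta/X$. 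Using that limits in a simplicial nerve are homotopy limits, the strict fibre product $\SMap_{\mathscr X^\circ}(x,x)$ then computes the homotopy pullback $X\times_{\mathscr X^\circ}X$, so $\SMap_{\mathscr X^\circ}(x,x)\simeq\Aut(x)$; by Lemma \ref{lem:diagram restriction inner fibration} it is a right fibration over $X$, with fibre over $s$ the mapping space $\Map_{\mathscr X_s^\sim}(x_s,x_s)=\Omega_{x_s}(\mathscr X_s^\sim)$. The inclusion $\mathscr X^\circ\hookrightarrow\mathscr X$ is a morphism over $X$ carrying the section $x$ to the section $x$, so it induces a morphism of relative slices, and hence a natural monomorphism of right fibrations over $X$
\[
\SMap_{\mathscr X^\circ}(x,x)\hookrightarrow\SMap_{\mathscr X}(x,x)=\Map(x,x),
\]
which over each $s$ is the canonical inclusion $\Map_{\mathscr X_s^\sim}(x_s,x_s)\hookrightarrow\Map_{\mathscr X_s}(x_s,x_s)$. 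I would finish by identifying its image with $\Aut(x)'$: a vertex of $\Map_{\mathscr X_s}(x_s,x_s)$, i.e.\ a morphism $x_s\to x_s$ in $\mathscr X_s$, lies in the sub-Kan-complex $\Map_{\mathscr X_s^\sim}(x_s,x_s)$ precisely when it is an equivalence of $\mathscr X_s$, and since being an equivalence is invariant under homotopy the image is exactly the union of those connected components --- which is by definition the fibre of $\Aut(x)'$ over $s$. As this holds naturally in $s$ and all the objects involved are fibrant, the composite $\Aut(x)\simeq\SMap_{\mathscr X^\circ}(x,x)\xrightarrow{\sim}\Aut(x)'$ is the desired equivalence of right fibrations over $X$.

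The step I expect to be the main obstacle is the identification $\SMap_{\mathscr X^\circ}(x,x)\simeq\Aut(x)$: one has to verify carefully that the relative slice $(\mathscr X^\circ)^{/x_\bullet}$ is genuinely a contravariant fibration over $X$ and that taking the strict fibre product against it really presents the homotopy pullback defining the loop object $\Omega_x\mathscr X^\circ$ --- that is, transporting the abstract loop object out of the $\infty$-topos into this concrete simplicial model, much in the spirit of the bookkeeping already carried out in the proof of Proposition \ref{prop:Map is a stack}. Once that identification is in hand, the monomorphism claim and the fibrewise computation with maximal Kan complexes are routine.
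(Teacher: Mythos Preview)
Your proposal is correct and takes essentially the same approach as the paper: both identify $\Aut(x)'$ with the strict fibre product $X\times_{\mathscr X^\circ}(\mathscr X^\circ)^{/x}$ and then observe that $(\mathscr X^\circ)^{/x}\to\mathscr X^\circ$ is a contravariant fibrant resolution of $x$, so that this strict pullback already computes the homotopy pullback $\Omega_x\mathscr X^\circ=\Aut(x)$. The paper packages these steps into a single cube of strict Cartesian squares comparing the defining pullbacks for $\SMap(x,x)$ over $\mathscr X$ and over $\mathscr X^\circ$, while you unfold the same argument sequentially and add the explicit fibrewise check; the mathematical content is identical.
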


\begin{proof} Consider the commutative diagram of Cartesian fibrations over $X$, where, the back square is the defining square for the mapping stack, the skew arrows are inclusion maps, and thus every square is strict Cartesian:
	\begin{center}
		
		\begin{tikzpicture}[xscale=3,yscale=1.5]
		\node (000) at (0,2) {$\Aut(x)'$};
		\node (100) at (2,2) {$(\mathscr X^\simeq)^{/x}$};
		\node (010) at (0,0) {$X$};
		\node (110) at (2,0) {$\mathscr X^\simeq.$};
		\node (001) at (1,3) {${\SMap}(x,x)$};
		\node (101) at (3,3) {$\mathscr X^{/x}$};
		\node (011) at (1,1) {$X$};
		\node (111) at (3,1) {$\mathscr X$};
		\path[->,font=\scriptsize,>=angle 90]
		(001) edge (101)
		(001) edge (011)
		(101) edge (111)
		(011) edge (111)
		(000) edge (001)
		(100) edge (101)
		(010) edge (011)
		(110) edge (111)
		(000) edge [-,line width=6pt,draw=white] (100)
		(000) edge (100)
		(000) edge [-,line width=6pt,draw=white] (010)
		(000) edge (010)
		(010) edge [-,line width=6pt,draw=white] (110)
		(010) edge (110)
		(100) edge [-,line width=6pt,draw=white] (110)
		(100) edge (110);
		\end{tikzpicture}
		
	\end{center}
	In particular, the front square is strict Cartesian. The interior $\mathscr X^\simeq$ is a right fibration over $X$. The right front vertical arrow $(\mathscr X^\simeq)^{/x}\to\mathscr X^\simeq$ is a contravariant fibrant resolution of $X\xrightarrow x\mathscr X^\simeq$. Therefore, the front square is homotopy Cartesian. In particular, it gives a natural equivalence $\Aut(x)'\simeq\Aut(x)$.
	
\end{proof}

\begin{notn} 
	\label{n:rshom}
	We let $\SHom_{X/S}(E,F):=f_*\SHom_{X/S}(E,F)$ denote the pushforward.

\end{notn}

\begin{rem} 
	Recall that an $\infty$-group is a certain kind of simplicial object in an
	quasi-category, see \cite[pg. 718]{lurie2009higher}.
	Let's describe how this works in our case of interest $\mathscr X=\Perf_X$. Let $E$ be a perfect complex on $X$. We have, over an $X$-scheme $U$, we have $\SMap(E,E)(U)\simeq\DK\tau_{\le0}\REnd(E_U)$, and $\Aut_{\Perf}(E)(U)\subseteq\SMap(E,E)(U)$ is the full $\infty$-subcategory on quasi-isomorphisms. We can assume $X=U$. We will now show two ways to give $G=\Aut_{\Perf(X)}(E)$ an $\infty$-group structure.
	
	a) In \cite{dhillon2018stack}*{\S7}, we have shown that in general the loop group $\Omega(c)$ of a pointed object $*\xrightarrow cC$ in an $\infty$-topos $\mathscr C$ can be given as the vertical bisimplicial set
	$$
	\Delta^\op\times\Delta^\op\xrightarrow{G_m=G_{m,\bullet}}\Set
	$$
	where $G_{m,n}$ is the set of maps $\Delta^m\times\Delta^n\xrightarrow\sigma\mathscr C$ such that $\sigma|(\sk_0\Delta^m)\times\Delta^n$ factors through $c$, and the maps are given by precomposition. This means for example that
	
	i) $G_1$ classifies quasi-isomorphisms $E\to E$. That is, a vertex in $G_1$ is a quasi-isomorphism $E\to E$, an edge $\phi\xrightarrow H\psi$ in $G_1$ is a diagram
	\begin{center}
		
		\begin{tikzpicture}[xscale=3,yscale=1.5]
		\node (C') at (0,1) {$E$};
		\node (D') at (1,1) {$E$};
		\node (C) at (0,0) {$E$};
		\node (D) at (1,0) {$E$};
		\node at (0.5,0.5) {$\Searrow\,H$};
		\path[->,font=\scriptsize,>=angle 90]
		(C') edge node [above] {$\phi$} (D')
		(C') edge node [left] {$\id$} (C)
		(D') edge node [right] {$\id$} (D)
		(C) edge node [below] {$\psi$} (D);
		\end{tikzpicture}
		
	\end{center}
	in $\mathscr D(X)$, that is a homotopy $E\xrightarrow HE[-1]$ such that $d(H)=\psi-\phi$, etc.
	
	ii) $G_2$ classifies composition up to homotopy diagrams. That is, a vertex in $G_2$ is a diagram
	\begin{center}
		
		\begin{tikzpicture}[xscale=2,yscale=2]
		\node (A) at (-1,0) {$E$};
		\node (B) at (-0.33,0.5) {$E$};
		\node (C) at (1,0) {$E$};
		\node at (-0.25,0.15) {$\Downarrow\,\alpha$};
		\path[->,font=\scriptsize,>=angle 90]
		(A) edge node [left] {$\phi_{01}$} (B)
		(B) edge node [right] {$\phi_{12}$} (C)
		(A) edge node [below] {$\phi_{02}$} (C);
		\end{tikzpicture}
		
	\end{center}
	in $\mathscr D(X)$, that is the $\phi_{ij}$ are quasi-isomorphisms, and $E\xrightarrow\alpha E[-1]$ is a homotopy such that $d(\alpha)=\phi_{02}-\phi_{12}\circ\phi_{01}$, an edge $(\phi,\alpha)\xrightarrow H(\psi,\beta)$ is a diagram
	\begin{center}
		
		\begin{tikzpicture}[xscale=2,yscale=2]
		\node (A') at (-1,1) {$E$};
		\node (B') at (-0.33,1.5) {$E$};
		\node (C') at (1,1) {$E$};
		\node at (0.15,1.15) {$\Downarrow\,\alpha$};
		\node (A) at (-1,0) {$E$};
		\node (B) at (-0.33,0.5) {$E$};
		\node (C) at (1,0) {$E$};
		\node at (0.15,0.15) {$\Downarrow\,\beta$};
		\path[->,font=\scriptsize,>=angle 90]
		(A) edge node [left] {$\psi_{01}$} (B)
		(B) edge node [right] {$\psi_{12}$} (C)
		(A) edge node [below] {$\psi_{02}$} (C)
		(A') edge node [left] {$\id$} (A)
		(B') edge node [pos=0.7, below left] {$\id$} (B)
		(C') edge node [right] {$\id$} (C)
		(A') edge node [left] {$\phi_{01}$} (B')
		(B') edge node [right] {$\phi_{12}$} (C')
		(A') edge [-,line width=6pt,draw=white] (C')
		(A') edge node [below] {$\phi_{02}$} (C');
		\end{tikzpicture}
		
	\end{center}
	in $\mathscr D(X)$, that is we have 1-homotopies
	$$\phi_{12}\circ\phi_{01}\xrightarrow\alpha\phi_{02},\,\psi_{12}\circ\psi_{01}\xrightarrow\beta\psi_{02},$$
	$$\phi_{01}\xrightarrow{H_{01}}\psi_{01},\,\phi_{02}\xrightarrow{H_{02}}\psi_{02},\,\phi_{12}\xrightarrow{H_{12}}\psi_{12},$$
	and a 2-homotopy $E\xrightarrow\Theta E[-2]$ such that 
	$$
	d(\Theta)=-H_{02}+H_{12}\circ\phi_{01}+\psi_{12}\circ H_{01}+\beta-\alpha,
	$$
	etc.
	
	iii) $G_3$ classifies associativity up to homotopy diagrams, and so on.
	
	b) The complex $\REnd(E)$ has a strict monoid structure, giving rise to a simplicial object in the category of complexes. Let $\RAut(E)$ denote the subcomplex
	$$
	\RAut(E)_n=\begin{cases}
	0 & n > 0\\
	\{\text{quasi-automorphisms }E\to E\} & n = 0 \\
	\REnd(E)_n & n < 0.
	\end{cases}
	$$
	Applying the functor $\DK$ to $\RAut(E)$, we get a simplicial object $\Aut''(E)$ of Kan complexes. We claim that this is equivalent to $\Aut_{\Perf}(E)$. Since $\RAut(E)$ with composition is a monoid and $\DK$ is an equivalence, $\Aut''(E)$ is a group object. Therefore, it remains to show that the square
	\begin{center}
		
		\begin{tikzpicture}[xscale=3,yscale=1.5]
		\node (C') at (0,1) {$\DK\RAut(E)$};
		\node (D') at (1,1) {$*$};
		\node (C) at (0,0) {$*$};
		\node (D) at (1,0) {$\Perf(X)$};
		\path[->,font=\scriptsize,>=angle 90]
		(C') edge (D')
		(C') edge (C)
		(D') edge node [right] {$\mathbf c_E$} (D)
		(C) edge node [below] {$\mathbf c_E$} (D);
		\end{tikzpicture}
		
	\end{center}
	is homotopy Cartesian. 
	We're done by Proposition \ref{prop:Aut in Map}.
	
\end{rem}


\subsection{$\Aut E$ is algebraic}

Let $k$ be a field, $X\xrightarrow fS$ a proper morphism of schemes, and $E,F$ perfect complexes on $X$. Our goal in this
section is to show that $\SHom_{X/S}(E,F)$ is algebraic. 
This will imply that $\Aut_{X/S}(E)$ and $\B\Aut_{X/S}(E)$ are also algebraic. In \cite{toen2007moduli}*{Corollary 3.29}, To\"en and Vaqui\'e show that in case $f$ is smooth and proper, the stack $\Perf_{X/S}^{[a,b]}$ of families of perfect complexes of a given Tor amplitude is algebraic. Note that we do not require $f$ to be smooth.

Let's start by recalling some definitions.
This discussion is borrowed from  \cite[Ch. 2, \S4]{gr}.

Given a non-negative integer $n$ we will inductively define 
\begin{enumerate}
	\item what it means for $\msX\in\St(S)$	to be an $n$-algebraic stack
	\item what it means for a morphism $f:\msX\rightarrow \msY$ in $\St(S)$ to be $n$-representable
	\item what it means for an $n$-representable morphism to be smooth, surjective, \'etale and flat.
\end{enumerate}

The collection of algebraic stacks and representable morphisms is obtained by taking the union over all
$n$.

A $0$-algebraic stack is a disjoint union of affine schemes and a $0$-representable morphism 
is a morphism $f:\msX\rightarrow \msY$ such that for every
$$
T\rightarrow \msY,
$$
where $T$ is a disjoint union of affine schemes, the fibered product $T\times_\msY \msX$ is 
a disjoint union of affine schemes. The notions of smooth, surjective, and flat have their usual meanings.

An $n$-algebraic stack is a stack $\msX\in\St(S)$ so that 
\begin{enumerate}
	\item the diagonal $\msX\rightarrow \msX\times \msX$ is $(n-1)$-representable,
	\item and there is an $(n-1)$-algebraic stack $P$ and a smooth $(n-1)$-representable surjective morphism
	$p:P\rightarrow \msX$. (See below for the meaning of smooth and surjective, defined inductively)
\end{enumerate}
One can check, using the first condition, that the morphism $p$ above is always $(n-1)$-representable. The morphism $p$
is called a presentation for $\msX$. It allows us to transport notions from algebraic geometry to the study of
$\msX$. For example we call $\msX$ smooth if $P$ is smooth, defined inductively. 

We say that a morphism $f:\msX\rightarrow \msY$ is $n$-representable if for every 
$$
T\rightarrow \msY,
$$
where $T$ is a disjoint union of affine schemes, the fibered product $T\times_\msY \msX$ is 
an $n$-algebraic stack.
We say that the $n$-representable morphism is smooth (resp. surjective or flat) if for every affine scheme $T$ and morphism $T\rightarrow \msY$ the composite morphism
$$
P\stackrel{p}{\rightarrow} T\times_\msY \msX \rightarrow T,
$$
where $p$ is a presentation, is smooth (resp. surjective or flat). This does not depend on the choice of 
presentation, see \cite[3.6.7]{gr}.

\begin{prop}\label{p:algebraic}
	\begin{enumerate}
	\item
    Let  $f:\msX\rightarrow \msY$ be an $n$-representable morphism in $\St(S)$.	Then the diagonal
    map $ \msX \rightarrow \msX\times_\msY \msX$ is $(n-1)$-representable.
    \item
	Let $f:\msX\rightarrow \msY$ be a $n$-representable morphism of stacks. Suppose that
	$\msY$ is $n$-algebraic. Then $\msX$ is $n$-algebraic.
	\end{enumerate}
\end{prop}

\begin{proof}
	This is \cite[Lemma 4.2.2, Proposition 4.2.4]{gr}. We will sketch the idea here.
	
	We start with the first part.
	Consider a morphism from an affine scheme $T\rightarrow \msX\times_\msY\msX$ given by a pair of morphisms
	$\alpha$ and $\beta$. Composing $\alpha$ and $\beta$ with $f$, we get a pair of equivalent morphisms $T\rightarrow \msY$ making $T$ into a $\msY$-scheme. Write $\msX_T$ for $\msX\times_\msY T$. Since $f$ is $n$-representable, the pullback $\mathscr X_T$ is $n$-algebraic over $T$. Therefore, the diagonal
	$$
	\mathscr X_T\to\mathscr X_T\times_T\mathscr X_T
	$$
	is $(n-1)$-representable. Therefore its pullback along $T\to\mathscr X_T\times_T\mathscr X_T$ is $(n-1)$-algebraic. But this is equivalent to the pullback of the diagonal $\mathscr X\to\mathscr X\times_{\mathscr Y}\mathscr X$ along $T\to\mathscr X\times_{\mathscr Y}\mathscr X$, thus concluding the proof.
	
	The second part is proved in detail in \cite[4.2.4]{gr}, so we sketch the proof. One factors the
	diagonal as
	$$
	\msX\rightarrow \msX\times_\msY\msX \rightarrow \msX\times\msX. 
	$$
	The first map is $(n-1)$-representable. The morphism $\msY\rightarrow \msY\times\msY$ is also $(n-1)$-representable. This implies that  $\msX\times_\msY\msX \rightarrow \msX\times\msX$ is $(n-1)$-representable
	as it is a base change of the diagonal and one can check that a base change of a $(n-1)$-representable
	morphism is $(n-1)$-representable by pasting cartesian squares.
	
	Finally we need a presentation for $\msX$. Let $P\rightarrow \msY$ be a presentation. Then
	$P\times_\msY \msX$ is $n$-algebraic so has a presentation $P'$ which is also a presentation for $\msX$.
\end{proof}

\begin{defn} Let $E$ be a perfect complex on $X$. We will say that \emph{its Tor amplitude is contained in $[a,b]$}, if for all $\mathscr O_X$-modules $M$, we have
$$
\mathscr H^i(E\otimes^LM)=0\text{ for }i\notin[a,b].
$$
We will also say that \emph{$E$ is of Tor amplitude $[a,b]$} if $[a,b]$ is the minimal interval so that the Tor amplitude of $E$ is contained in $[a,b]$. We will say that \emph{$E$ has Tor length $b-a$}, if it has Tor amplitude $[a,b]$.

\end{defn}

\begin{prop}\label{prop:Tor amplitude} The following assertions hold.

\begin{enumerate}

\item There exist integers $a\le b$ such that $E$ is of Tor amplitude $[a,b]$.

\item Let $Y\xrightarrow gX$ be a morphism of $S$-schemes. Let $E$ be a perfect complex of Tor amplitude $[a,b]$. Then $Lg^*E$ is of Tor amplitude $[a,b]$.

\item Let $Y\to X$ be a morphism of $S$-schemes. Let $\Perf^{\ge0}(Y)\subset\Perf(X)$ denote the full $\infty$-subcategory of perfect complexes with Tor amplitude contained in $[0,b]$ for some $b\ge0$. Similarly, let $\Perf^{\le0}(Y)\subset\Perf(X)$ denote the full $\infty$-subcategory of perfect complexes with Tor amplitude contained in $[a,0]$ for some $a\le0$. Then $(\Perf^{\le0}(Y),\Perf^{\ge0}(Y))$ is a $t$-structure on the stable quasi-category $\Perf(Y)$.

\end{enumerate}

\end{prop}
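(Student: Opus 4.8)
The plan is to dispatch parts (1) and (2) by short local arguments and to prove (3) by brutal (stupid) truncation. For (1): a perfect complex is Zariski-locally quasi-isomorphic to a bounded complex of finite free modules, so, $X$ being quasi-compact, one may pick a finite affine cover $X=\bigcup_{i=1}^{n}U_i$ with $E|_{U_i}$ quasi-isomorphic to a finite free complex concentrated in cohomological degrees $[a_i,b_i]$. For any $\mathscr O_X$-module $M$ the sheaf $\mathscr H^j(E\otimes^L M)$ restricts on $U_i$ to the $j$-th cohomology of that complex tensored with $M|_{U_i}$, hence vanishes for $j\notin[a_i,b_i]$; since cohomology sheaves are sheaves, $\mathscr H^j(E\otimes^L M)=0$ for $j\notin[a,b]$ with $a=\min_i a_i$ and $b=\max_i b_i$. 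Thus the Tor amplitude of $E$ is contained in $[a,b]$, and as the closed intervals containing it form a nonempty family stable under intersection, there is a least one. For (2): the statement is local on $X$ and $Y$, so reduce to $X=\Spec A$, $Y=\Spec B$, with $g$ a ring homomorphism $A\to B$, and regard $E$ as a complex of $A$-modules; for a $B$-module $N$, associativity of the derived tensor product gives $N\otimes^L_B(B\otimes^L_A E)\simeq N\otimes^L_A E$ (with $N$ restricted to an $A$-module on the right), and since $E$ has Tor amplitude in $[a,b]$ over $A$ the right-hand side has cohomology only in $[a,b]$, hence so does $N\otimes^L_B Lg^*E$. (Equivalently, pull back a local strictly perfect model of $E$ in degrees $[a,b]$, using that Tor amplitude $\subseteq[a,b]$ for a perfect complex is equivalent to being Zariski-locally a finite free complex in degrees $[a,b]$.)

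For (3): by \cite{lurie2014higher}*{\S1.2.1} such a structure on the stable quasi-category $\Perf(Y)$ amounts to one on its triangulated homotopy category, so it suffices to check three things. First, stability under shift: shifting $E$ by $[n]$ translates the interval containing its Tor amplitude by $-n$, so $\Perf^{\le0}(Y)[1]\subseteq\Perf^{\le0}(Y)$ and $\Perf^{\ge0}(Y)[-1]=\Perf^{\ge1}(Y)\subseteq\Perf^{\ge0}(Y)$. Second, orthogonality: for $E\in\Perf^{\le0}(Y)$ and $F\in\Perf^{\ge1}(Y)$, work Zariski-locally and choose finite free models $E\simeq P^\bullet$ in degrees $\le0$ and $F\simeq Q^\bullet$ in degrees $\ge1$; the Hom-complex has degree-$n$ term $\prod_p\Hom(P^p,Q^{p+n})$, which vanishes unless $n\ge1$, so $R\mathscr Hom_{\mathscr O_Y}(E,F)$ is locally, hence globally, concentrated in cohomological degrees $\ge1$, and applying the left $t$-exact functor $R\Gamma(Y,-)$ gives $\Hom_{h\Perf(Y)}(E,F)=0$ (indeed the mapping space $\DK\tau_{\le0}\RHom(E,F)$ is contractible). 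Third, the truncation triangle: since $\Perf_Y$ is a stack and, by orthogonality, a truncation triangle is unique up to unique isomorphism wherever it exists, it suffices to build one Zariski-locally and glue; so take $Y=\Spec R$ and represent $E$ by a bounded complex $P^\bullet$ of finitely generated projective $R$-modules, of Tor amplitude contained in some $[a,b]$. The termwise-split short exact sequence from brutal truncation
$$
0\to\sigma^{\ge1}P^\bullet\to P^\bullet\to\sigma^{\le0}P^\bullet\to0
$$
gives a distinguished triangle $\sigma^{\ge1}P^\bullet\to E\to\sigma^{\le0}P^\bullet\to\sigma^{\ge1}P^\bullet[1]$ in $\mathbf D(R)$ whose third term is a finite projective complex in degrees $\le0$, hence lies in $\Perf^{\le0}(Y)$, and whose first term is a finite projective complex in degrees $\ge1$, hence lies in $\Perf^{\ge1}(Y)$. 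These three facts give the assertion.

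The substantive part is (3), and two points need care. First, the decomposition must come from \emph{brutal} truncation and not from the cohomological truncations $\tau^{\le0},\tau^{\ge1}$, which need not preserve perfectness: for instance over $R=k[\epsilon]/(\epsilon^2)$, the perfect complex $[R\xrightarrow{\epsilon}R]$ in cohomological degrees $0,1$ has $\tau^{\le0}\simeq k[0]$ and $\tau^{\ge1}\simeq k[-1]$, and $k$ has infinite projective dimension over $R$. Hence the handedness of the truncation triangle must be kept in step with brutal truncation throughout. Second, a perfect complex on a non-affine scheme need not admit a global strictly perfect model, so the brutal truncation must be carried out over an affine cover and glued; this is legitimate because the local truncations agree canonically on overlaps by orthogonality and $\Perf_Y$ satisfies descent.
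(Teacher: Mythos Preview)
The paper disposes of all three parts by a one-line citation of \cite{toen2007moduli}*{Proposition 2.22}, so your direct argument is genuinely different; your treatments of (1) and (2) are correct and standard.

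For (3) there is a genuine gap: the three axioms you verify are mutually incompatible. With the aisle $\Perf^{\le0}$ (which is the side closed under $[1]$) and the orthogonality $\Hom(\Perf^{\le0},\Perf^{\ge1})=0$ that you prove, the $t$-structure decomposition axiom demands a triangle $A\to E\to B$ with $A\in\Perf^{\le0}$ and $B\in\Perf^{\ge1}$. But brutal truncation gives the short exact sequence $0\to\sigma^{\ge1}P\to P\to\sigma^{\le0}P\to0$, i.e.\ a triangle with the \emph{first} term in $\Perf^{\ge1}$ and the \emph{third} in $\Perf^{\le0}$, the opposite order. Rotating does not repair this, and your gluing step (uniqueness of the truncation via orthogonality) depends on having the triangle in the correct order.

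In fact the triangle in the required order need not exist at all: over a domain $R$ with a nonzero nonunit $a$, take $E=(R/a)[-1]$, of Tor amplitude $[0,1]$; if $A\to E\to B$ with $A\in\Perf^{\le0}$ and $B\in\Perf^{\ge1}$, then $H^1(A)=0$ and the long exact sequence forces $H^1(B)\cong R/a$, while $H^1$ of a bounded complex of projectives in degrees $\ge1$ is a submodule of a projective module, hence torsion-free. So the data you assemble is not a $t$-structure in the BBD/Lurie sense but a \emph{weight structure} (co-$t$-structure) in the sense of Bondarko--Pauksztello: the shift and orthogonality axioms are the same, but the decomposition triangle goes the other way, and decompositions are not unique. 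This is an imprecision already present in the cited source and in the paper's statement; for the paper's later use (existence of perfect ``truncations'' $\tau_{\le a}$, $\tau_{>a}$ entering the induction in Theorem \ref{thm:RHom is algebraic}) the weight-structure version is exactly what is needed.
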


\begin{proof} Since we're using cohomology sheaves in the definition of Tor amplitude, these follow from \cite{toen2007moduli}*{Proposition 2.22}.

\end{proof}

\begin{notn} In this section, the notations $\tau_{<b},\tau_{\le b},\tau_{>a}$ and $\tau_{\ge a}$ will refer to truncation with respect to the $t$-structure given by Tor amplitude.

\end{notn}

\begin{thm}\label{thm:RHom is algebraic} Let $X\xrightarrow fS$ be a proper morphism of quasi-compact and quasi-separated schemes, and $E,F$ perfect complexes on $X$. Then the stack of families of morphisms $\SHom_{X/S}(E,F)$ is algebraic.

\end{thm}

\begin{cor}\label{cor:Aut is algebraic} The automorphism group $\Aut_{X/S} E$ is algebraic.

\end{cor}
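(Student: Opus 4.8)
The plan is to realize $\Aut_{X/S}(E)$ as an \emph{open substack} of $\SRHom_{X/S}(E,E)$; since the latter is algebraic by Theorem \ref{thm:RHom is algebraic}, and an open substack of an algebraic stack is again algebraic (one pulls back a smooth presentation), this yields the corollary. By Proposition \ref{prop:Aut in Map}, together with the Remark following it identifying the loop-group and mapping-space descriptions of $\Aut_{\Perf}(E)$ (relativized to $X/S$ by pushforward, cf.\ Corollary \ref{c:isStack}), the stack $\Aut_{X/S}(E)$ agrees with the full substack $\Aut'(E)\subseteq\SMap(E,E)=\SRHom_{X/S}(E,E)$ spanned by the equivalences: over an $S$-scheme $T$ its $T$-points form the subspace of $\SRHom_{X/S}(E,E)(T)$ consisting of those $\phi\colon E_{X_T}\to E_{X_T}$ that are quasi-isomorphisms. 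Because being an equivalence in $\mathscr D(X_T)$ is stable under homotopy, this subspace is a union of connected components, so $\Aut_{X/S}(E)\hookrightarrow\SRHom_{X/S}(E,E)$ is a monomorphism of stacks cut out by a levelwise sieve.

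First I would check that this monomorphism is representable by open immersions. Fix an affine $T=\Spec R$ over $S$ and a map $T\to\SRHom_{X/S}(E,E)$ classifying some $\phi\colon E_{X_T}\to E_{X_T}$, where $X_T=X\times_S T$; set $C=\Cone(\phi)$, a perfect complex on $X_T$, and let $h\colon X_T\to T$ be the projection, which is proper since $X\to S$ is. For any $T'\to T$ the pulled-back morphism $\phi_{X_{T'}}$ is a quasi-isomorphism if and only if the perfect complex $C|_{X_{T'}}$ vanishes, if and only if $\Supp(C)$ — a closed subset of $X_T$ whose formation commutes with base change — is disjoint from the image of $X_{T'}\to X_T$, if and only if $T'\to T$ factors through the open complement $T\smallsetminus h(\Supp C)$ (open because $h$ proper forces $h(\Supp C)$ to be closed). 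Hence $T\times_{\SRHom_{X/S}(E,E)}\Aut_{X/S}(E)$ is represented by this open subscheme of $T$, so $\Aut_{X/S}(E)\hookrightarrow\SRHom_{X/S}(E,E)$ is an open immersion after every such base change.

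Finally, picking a smooth atlas $U\to\SRHom_{X/S}(E,E)$ supplied by Theorem \ref{thm:RHom is algebraic}, the fibre product $U\times_{\SRHom_{X/S}(E,E)}\Aut_{X/S}(E)$ is an open subscheme of $U$ and maps smoothly and surjectively onto $\Aut_{X/S}(E)$, while the diagonal conditions are inherited from those of $\SRHom_{X/S}(E,E)$; thus $\Aut_{X/S}(E)$ has a presentation and is algebraic. The step I expect to need the most care is the openness claim: I must be sure that ``$\phi$ is a quasi-isomorphism'' is genuinely an open condition stable under base change — i.e.\ that the support of the perfect complex $\Cone(\phi)$ is closed, compatibly with pullback, and that properness of $X\to S$ makes its image in the base closed — and, on the $\infty$-categorical side, that the full substack on equivalences really does pull back to the sieve described above rather than to something carrying extra homotopy, which reduces to the fact that the equivalences form a union of path components of the mapping space.
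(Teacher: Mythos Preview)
Your proposal is correct and follows essentially the same approach as the paper: identify $\Aut_{X/S}(E)$ with the full substack of $\SRHom_{X/S}(E,E)$ on equivalences via Proposition~\ref{prop:Aut in Map}, then show this inclusion is an open immersion by looking at the support of the cone of $\phi$, and conclude algebraicity from Theorem~\ref{thm:RHom is algebraic}. If anything, you are more careful than the paper on one point: the cone $C=\Cone(\phi)$ lives on $X_T$, not on $T$, so to obtain an open subscheme of $T$ one must push $\Supp(C)$ down along the proper map $h\colon X_T\to T$ and use that proper maps have closed image --- you make this explicit, whereas the paper's phrasing elides it.
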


\begin{cor}\label{cor:BAut is algebraic} The delooping $\B\Aut_{X/S} E$ is algebraic.

\end{cor}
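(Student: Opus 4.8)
This is the standard fact that the delooping of an algebraic group stack is algebraic, and the plan is to make it precise through the bar construction. By Corollary~\ref{cor:Aut is algebraic} the group object $G:=\Aut_{X/S}E$ in the $\infty$-topos $\St(S)$ is algebraic, hence --- as already noted in the Introduction --- flat and locally of finite presentation (``fppf'') over $S$. Since $\B G$ is formed in $\St(S)$ as the colimit $\B G\simeq|B_\bullet G|$ of the bar construction, the point $*\to\B G$ is an effective epimorphism whose \v Cech nerve is precisely $B_\bullet G$: the groupoid object $[n]\mapsto G\times_S\dotsb\times_S G$ ($n$ factors, the $0$-th term being $S$).

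The first step is to note that each $B_nG$ is algebraic, since algebraic $S$-stacks are closed under finite fibre products, and that each face and degeneracy map $B_nG\to B_mG$ is flat and locally of finite presentation: the degeneracies are sections, while the faces are assembled from the projections $G\times_S G\to G$ and the multiplication $G\times_S G\to G$ --- the latter a composite of the shear isomorphism $(g,h)\mapsto(g,gh)$ with a projection --- all of which are base changes of $G\to S$ up to isomorphism. Thus $B_\bullet G$ is a flat, finitely presented groupoid object in algebraic $S$-stacks, and by the closure of algebraic stacks under such groupoid quotients (\cite[Ch.~2, \S4]{gr}) its realization $|B_\bullet G|\simeq\B G$ is algebraic, one Artin level above $G$.

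I would also spell out the resulting presentation of $\B G$ directly: the atlas $*\to\B G$ is surjective and fppf, because its base change along any $\Spec R\to\B G$ classifying a $G$-torsor $P$ is the total space $P$, which is fppf-locally on $\Spec R$ isomorphic to $G_R\to\Spec R$, hence fppf by descent; and the diagonal $\B G\to\B G\times_S\B G$ is representable by algebraic stacks, since its fibre over a pair of $G$-torsors $(P,Q)$ is $\Isom_R(P,Q)$, which is empty or a torsor under the inner form $\underline\Aut(P)$ of $G$ and therefore (being fppf-locally $G_R$) algebraic.

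The formal skeleton is routine; what needs care is that the hypotheses of the cited closure statement are literally in force --- above all the flatness and local finite presentation of $\Aut_{X/S}E\to S$, so that $B_\bullet G$ is a bona fide flat groupoid and $*\to\B G$ a bona fide atlas --- together with tracking the Artin level, since $\Aut_{X/S}E$ is a genuine higher (derived, non-schematic) algebraic stack, not an algebraic space. Once these points are settled the step from $G$ algebraic to $\B G$ algebraic is purely formal.
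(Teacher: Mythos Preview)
The paper does not actually supply a separate proof of this corollary: it is stated immediately after Corollary~\ref{cor:Aut is algebraic} and left without argument, evidently regarded as a formal consequence of $\Aut_{X/S}E$ being algebraic (the Introduction already uses ``$\Aut E$ is algebraic, and thus fppf'' without further comment, and the reference \cite[Ch.~2, \S4]{gr} is where the relevant closure properties live). Your write-up is therefore not competing with a proof in the paper so much as filling in the reasoning the authors leave implicit.

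Your argument is correct and is exactly the standard one: realize $\B G$ as the geometric realization of the bar groupoid $B_\bullet G$, observe that each $B_nG$ is algebraic and that the structure maps are fppf, and invoke closure of algebraic stacks under quotients by fppf groupoid objects. The alternative presentation you sketch (atlas $\ast\to\B G$ with fppf fibres and representable diagonal via $\Isom$-torsors) is equally valid and is the way one would check the definition by hand. The one point you flag yourself --- that $\Aut_{X/S}E\to S$ is genuinely fppf --- is indeed the only nontrivial input, and the paper, like you, extracts it from the algebraicity established in Theorem~\ref{thm:RHom is algebraic} and Corollary~\ref{cor:Aut is algebraic}.
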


\begin{proof}[Proof of Corollary \ref{cor:Aut is algebraic}] By Proposition \ref{prop:Aut in Map}, $\Aut_{X/S}(E)\subseteq\SHom_{X/S}(E,E)$ is the full substack of quasi-isomorphisms. We claim that the inclusion map $\Aut_{X/S} E\xrightarrow i\SHom_{X/S}(E,E)$ is an open immersion. Let $T$ be an $S$-scheme, and $T\xrightarrow{t}\SREnd_{X/S}(E,F)$ a section. Then $T$ represents a morphism of complexes $E_{X_T}\xrightarrow{\phi}F_{X_T}$. By construction, the strict pullback $T'$ of $T$ along $i$ is the complement of the support of the cone of $\phi$, that is the intersection of the complements of the supports of the cohomology sheaves of the cone of $\phi$, which is closed. Since fibrewise $i$ is the inclusion of connected components, $T'$ is the homotopy pullback of $T$ along $i$ too.

\end{proof}

\begin{proof}[Proof of Theorem \ref{thm:RHom is algebraic}] We can assume that $E,F\ne0$. Let us use induction on the sum $\ell+m$ of the Tor lengths of $E$ and $F$. By Lemma \ref{lem:RHom and shift}, for the starting case $\ell+m=2$ we can assume that $E$ and $F$ are locally free sheaves of finite rank concentrated in degree 0. But then $\SHom_{X/S}(E,F)$ is an algebraic space.

For the inductive case, we can suppose that $E$ is of Tor amplitude $[a,b]$ with $a<b$, as the case $\ell>1$ can be dealt with in a similar manner. Let $T=\SHom_{X/S}(E,\tau_{>a}F)$. We have
$$
\SHom_{X_T/T}(E_T,F_T)\simeq\SHom_{X/S}(E,F)\times_ST$$
{ and }
$$\SHom_{X_T/T}(E_T,\tau_{>a}F_T)\simeq\SHom_{X/S}(E,\tau_{>a}F)\times_ST.
$$
Therefore, we obtain a square of $T$-stacks
\begin{center}

\begin{tikzpicture}[xscale=5,yscale=1.5]
\node (C') at (0,1) {$\SHom_{X/S}(E,F)$};
\node (E') at (1,1) {$\SHom_{X/S}(E,F)\times_ST$};
\node at (1.5,1) {$\simeq$};
\node (D') at (2,1) {$\SHom_{X_T/T}(E_T,F_T)$};
\node (C) at (0,0) {$T$};
\node (E) at (1,0) {$\SHom_{X/S}(E,\tau_{>a}F)\times_ST$};
\node at (1.5,0) {$\simeq$};
\node (D) at (2,0) {$\SHom_{X_T/T}(E_T,\tau_{>a}F_T),$};
\path[->,font=\scriptsize,>=angle 90]
(C') edge node [above] {$\Gamma_{\tau_{>a}}$} (E')
(C') edge node [right] {$\tau_{>a}$} (C)
(D') edge node [right] {$\tau_{>a}$} (D)
(C) edge node [above] {$\Delta$} (E);
\end{tikzpicture}

\end{center}
where the top horizontal arrow is the graph of the truncation map
$$
\tau_{>a} : \SHom_{X/S}(E,F)\rightarrow T=\SHom_{X/S}(E,\tau_{>a}F).
$$
This Cartesian square is in fact homotopy 
 Cartesian. To see this observe that we have a diagram 
\begin{center}
	
	\begin{tikzpicture}[xscale=5,yscale=1.5]
	\matrix (m) [matrix of math nodes,row sep=3em,column sep=4em,minimum width=2em]
	{
		\SHom_{X/S}(E,F) & \SHom(E,F)\times_S \SHom(E,\tau_{>a} F) & \SHom(E,F) \\
	 	\SHom_{X/S}(E,\tau_{>a}F) & \SHom(E,\tau_{>a} F)\times_S \SHom(E,\tau_{>a} F) & \SHom(E,\tau_{>a}F) \\};
 	\path[-stealth]
 	(m-1-1) edge (m-1-2) edge (m-2-1) 
 	(m-2-1) edge (m-2-2)
 	(m-1-2) edge (m-1-3) edge (m-2-2)
 	(m-2-2) edge (m-2-3)
 	(m-1-3) edge (m-2-3);
	\end{tikzpicture}
\end{center} 
in which the right horizontal arrows are projections. As the $\SHom_{X/S}(E,F)$ are stacks they are fibrant and hence the bottom right horizontal arrow is a
fibration. It follows that the right square is homotopy Cartesian. As the outer square is clearly so, the claim follows from the pasting lemma.
 
By via an argument similar to that of Lemma \ref{lem:loop group of RHom}, we also have the homotopy Cartesian diagram
\begin{center}

\begin{tikzpicture}[xscale=5,yscale=1.5]
\node (C') at (0,1) {$\SHom_{X_T/T}(E_T,\tau_{\le a}F_T)$};
\node (D') at (1,1) {$\SHom_{X_T/T}(E_T,F_T)$};
\node (C) at (0,0) {$T$};
\node (D) at (1,0) {$\SHom_{X_T/T}(E_T,\tau_{>a}F_T).$};
\path[->,font=\scriptsize,>=angle 90]
(C') edge node [above] {$\tau_{\le a}\circ$} (D')
(C') edge (C)
(D') edge node [right] {$\tau_{>a}\circ$} (D)
(C) edge node [above] {$\Delta$} (D);
\end{tikzpicture}

\end{center}
Therefore, the map of $S$-stacks $\SHom_{X/S}(E,F)\to\SHom_{X/S}(E,\tau_{>a}F)$ is equivalent to the map of $S$-stacks $\SHom_{X_T/T}(E_T,\tau_{\le a}F_T)\to T$, and thus representable. But $T\to S$ is algebraic by the induction hypothesis, so,
using \ref{p:algebraic}, the composite $\SHom_{X/S}(E,F)\to S$ is algebraic too.
\end{proof}

\begin{lem}\label{lem:RHom and shift} Let $E\in\Perf(X)^{\ge0}$ and $F\in\Perf(X)^{<0}$ (recall that this means $E$ has Tor amplitude contained in $[0,\infty)$, and $F$ has Tor amplitude contained in $(-\infty,0)$. Then there exists an $\infty$-group structure on $\SHom_{X/S}(E,F[1])$ such that $\B(\SHom_{X/S}(E,F[1]))\simeq\SHom_{X/S}(E,F)$.

\end{lem}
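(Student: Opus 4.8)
The plan is to realise $\SRHom_{X/S}(E,F[1])$ as the loop object of $\SRHom_{X/S}(E,F)$ in the $\infty$-topos $\St(S)$ and then invoke the equivalence between pointed connected objects and group objects. First I would record the amplitude bounds that make the truncation $\tau_{\le0}$ harmless. For an $R$-point $\Spec R\to X$ over $S$, Proposition~\ref{prop:Tor amplitude}(2) gives $E_R$ of Tor amplitude $[0,b]$ and $F_R$ of Tor amplitude $[a',-1]$; since $E_R$ is perfect we have $\RHom(E_R,-)\simeq E_R^\vee\otimes^L(-)$ with $E_R^\vee$ of Tor amplitude $[-b,0]$, so $\RHom(E_R,F_R)$ has cohomology concentrated in degrees $\subseteq[a'-b,-1]$ and $\RHom(E_R,F_R[1])$ in degrees $\subseteq[a'-b+1,0]$. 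Both complexes are therefore connective, $\tau_{\le0}$ acts as the identity on them, and moreover $\pi_0\SRHom_{X/S}(E,F)(R)=\Ext^0(E_R,F_R)=0$, so each value of $\SRHom_{X/S}(E,F)$ is connected. (In the relativized version one inserts $Rf_{R*}$, which is exact; the only extra point then is that one still needs $\SRHom_{X/S}(E,F)$ to be connected, i.e. $\Hom_{\mathscr D(X_R)}(E_R,F_R)=0$.)

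Next I would produce the fibre sequence. In the stable $\infty$-category $\mathscr D(X_R)$ there is the fibre sequence $F_R[1]\to 0\to F_R$ exhibiting $F_R[1]$ as $\Omega F_R$. The functor $\RHom_{X_R}(E_R,-)$ is exact, $\tau_{\le0}\colon\mathscr D(R)\to\mathscr D(R)^{\le0}$ preserves limits (being right adjoint to the inclusion of connective objects), and $\DK\colon\mathscr D(R)^{\le0}\xrightarrow{\ \sim\ }\{\text{simplicial }R\text{-modules}\}$ followed by the forgetful functor to Kan complexes preserves limits. Applying these in turn to the fibre sequence — and, in the relative case, $Rf_{R*}$ as well — produces a fibre sequence of Kan complexes $\SRHom_{X/S}(E,F[1])(R)\to *\to\SRHom_{X/S}(E,F)(R)$, natural in $R$ (here base change for the perfect complex $E$ is used). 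Since finite limits of stacks are computed objectwise, this assembles to a fibre sequence of stacks
\[
\SRHom_{X/S}(E,F[1])\longrightarrow *\longrightarrow\SRHom_{X/S}(E,F),
\]
and in particular it shows that each $\SRHom_{X/S}(E,F[1])(R)=\DK\tau_{\le0}\RHom(E_R,F_R[1])$ is, canonically and naturally in $R$, a simplicial abelian group.

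Finally I would conclude formally. The stack $\SRHom_{X/S}(E,F)$ is pointed (by the zero morphism, the second leg of the fibre sequence) and connected by the first paragraph, so by the loop/delooping equivalence recalled in Definition~\ref{d:loop} (\cite{lurie2009higher}*{\S7.2.2}) the canonical map $\B\Omega\,\SRHom_{X/S}(E,F)\to\SRHom_{X/S}(E,F)$ is an equivalence. The fibre sequence identifies the fibre of the basepoint, $\Omega\,\SRHom_{X/S}(E,F)$, with $\SRHom_{X/S}(E,F[1])$; transporting the canonical $\infty$-group structure on $\Omega\,\SRHom_{X/S}(E,F)$ along this equivalence equips $\SRHom_{X/S}(E,F[1])$ with an $\infty$-group structure for which $\B\,\SRHom_{X/S}(E,F[1])\simeq\B\Omega\,\SRHom_{X/S}(E,F)\simeq\SRHom_{X/S}(E,F)$. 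The only step I expect to require genuine care is the connectivity of $\SRHom_{X/S}(E,F)$ — equivalently, that the bottom term of the fibre sequence is all of $\SRHom_{X/S}(E,F)$ and not merely its basepoint component — which is precisely where the hypotheses $E\in\Perf(X)^{\ge0}$ and $F\in\Perf(X)^{<0}$ are used; the exactness of $\RHom_{X_R}(E_R,-)$, of $\tau_{\le0}$ (and of $Rf_{R*}$), and the delooping of pointed connected objects are routine.
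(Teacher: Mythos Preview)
Your argument is correct and follows essentially the same route as the paper. Both proofs (i) identify $\SRHom_{X/S}(E,F[1])$ with the loop object $\Omega\,\SRHom_{X/S}(E,F)$ via the fibre sequence coming from $F[1]\to 0\to F$ and exactness of $\RHom$, $\tau_{\le 0}$ (as a right adjoint), and $\DK$---the paper isolates this as Lemma~\ref{lem:loop group of RHom}---and then (ii) verify that $\SRHom_{X/S}(E,F)$ is connected, so that the \v Cech nerve (equivalently, the loop/deloop correspondence for pointed connected objects) yields $\B\SRHom_{X/S}(E,F[1])\simeq\SRHom_{X/S}(E,F)$. The only cosmetic difference is in step~(ii): you compute the Tor amplitude of $E_R^\vee\otimes^L F_R$ directly, while the paper invokes the $t$-structure orthogonality $\Hom(\Perf^{\ge 0},\Perf^{<0})=0$.
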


\begin{proof} By Lemma \ref{lem:loop group of RHom}, the \v Cech nerve of the map $\ast\xrightarrow0\SHom_{X/S}(E,F)$ gives the group structure on $\SHom_{X/S}(E,F[1])$. Therefore, it is enough to show that $\ast\xrightarrow0\SHom_{X/S}(E,F)$ is an effective epimorphism \cite{lurie2009higher}*{below Corollary 6.2.3.5}. It is enough to show that for all $T\in\Sch_S$, we have $\pi_0(\SHom_{X/S}(E,F)(T))=\Hom_{D(X_T)}(E_T,F_T)=0$. Note that $D(X_T)$ is the derived 1-category. By definition of Tor amplitude, we can assume $S=T$. Let $E\xrightarrow\alpha F$ be a morphism. Let us prove $\Hom_{D(X)}(E,F)=0$. By assumption, we have $\tau_{<0}E=0$ and $\tau_{\ge0}F=0$. Therefore, we get a morphism of distinguished triangles
\begin{center}

\begin{tikzpicture}[xscale=2,yscale=1.5]
\node (C') at (0,1) {$\tau_{<0}E$};
\node (D') at (1,1) {$E$};
\node (E') at (2,1) {$\tau_{\ge0}E$};
\node (C) at (0,0) {$\tau_{<0}F$};
\node (D) at (1,0) {$F$};
\node (E) at (2,0) {$\tau_{\ge0}F.$};
\path[->,font=\scriptsize,>=angle 90]
(C') edge node [above] {$\tau_{<0}$} (D')
(C') edge node [right] {$0$} (C)
(D') edge node [above] {$\tau_{\ge0}$} (E')
(D') edge node [right] {$\alpha$} (D)
(E') edge node [right] {$0$} (E)
(C) edge node [above] {$\tau_{<0}$} (D)
(D) edge node [above] {$\tau_{\ge0}$} (E);
\end{tikzpicture}

\end{center}
This shows that $\alpha=0$ as needed.

\end{proof}

\begin{lem}\label{lem:loop group of RHom} Let $E,F\in\mathscr D(X)$. Then the diagram of $\infty$-stacks
\begin{center}

\begin{tikzpicture}[xscale=3,yscale=1.5]
\node (C') at (0,1) {$\SHom_{X/S}(E,F)$};
\node (D') at (1,1) {$\ast$};
\node (C) at (0,0) {$\ast$};
\node (D) at (1,0) {$\SHom_{X/S}(E[1],F)$};
\path[->,font=\scriptsize,>=angle 90]
(C') edge (D')
(C') edge (C)
(D') edge (D)
(C) edge (D);
\end{tikzpicture}

\end{center}
is a homotopy fibre product.

\end{lem}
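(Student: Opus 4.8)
The plan is to identify the square as the image, under the functor $\SRHom_{X/S}(-,F)$, of a pushout square in the stable $\infty$-category $\msD(X)$; once this is set up, the homotopy-pullback property is formal.

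The key point is that $\SRHom_{X/S}(-,F)$ carries finite colimits of $\msD(X)$ to finite limits of stacks. To see this, recall that for an $S$-scheme $T$ the space $\SRHom_{X/S}(E,F)(T)$ is the mapping space $\Map_{\msD(X_T)}(E_{X_T},F_{X_T})$ in the derived category of $X_T=X\times_S T$ — this is exactly what the formula $\DK\tau_{\le0}\RHom$ computes, together with the pushforward used to relativise. Now the derived pullback $\msD(X)\to\msD(X_T)$ preserves colimits, being a left adjoint, and $\Map_{\msD(X_T)}(-,F_{X_T})\colon\msD(X_T)^{\op}\to\mathscr S$ sends colimits to limits by the universal property of colimits. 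Hence, for each $T$, the functor $E\mapsto\SRHom_{X/S}(E,F)(T)$ sends colimits in $\msD(X)$ to limits of spaces; in particular it sends the zero object (the initial object of $\msD(X)$) to $\ast$ and sends pushout squares to homotopy pullback squares. Since the $\SRHom_{X/S}$'s are stacks by Corollary \ref{c:isStack} and limits of stacks are computed objectwise in $T$, the same two statements hold for $\SRHom_{X/S}(-,F)$ as a functor to $\St(S)$.

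It then remains to feed in the right square. Because $\msD(X)$ is stable, there is a bicartesian square in $\msD(X)$ with $E[1]$ and $E$ at two opposite corners and $0$ at the remaining two; it is a pushout square, realising the defining (co)fibre sequence relating a complex and its shift, oriented so that $E$ occupies the pushout corner. Applying $\SRHom_{X/S}(-,F)$ and using $\SRHom_{X/S}(0,F)\simeq\ast$, its image is precisely the square in the statement, which is therefore a homotopy fibre product. Equivalently, this says $\SRHom_{X/S}(E,F)\simeq\Omega_{0}\SRHom_{X/S}(E[1],F)$, the loop stack of $\SRHom_{X/S}(E[1],F)$ at the zero morphism.

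I do not anticipate a genuine obstacle. The one spot that wants care is the identification of $\DK\tau_{\le0}\RHom(E_{X_T},F_{X_T})$ with the mapping space $\Map_{\msD(X_T)}(E_{X_T},F_{X_T})$, so that the universal property of colimits is available; everything else is formal, including the passage from the objectwise homotopy pullbacks to a homotopy fibre product of stacks, which is immediate once one knows limits in $\St(S)$ are detected objectwise on affines.
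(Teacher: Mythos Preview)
Your overall strategy---feed a bicartesian square of $\mathscr D(X)$ into the contravariant functor $\SRHom_{X/S}(-,F)$ and use that it carries pushouts to pullbacks objectwise---is sound, and is a cleaner packaging of what the paper does. The paper argues fibrewise via Dold--Kan, reducing to the shift identity $\RHom(E,F[1])=\RHom(E,F)[1]$; your version makes the same reduction but phrases it as ``mapping out of a colimit is a limit''.

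The problem is the orientation step. The bicartesian square in $\mathscr D(X)$ with $E$, $E[1]$ on one diagonal and $0,0$ on the other is
\[
\begin{array}{ccc} E & \longrightarrow & 0 \\ \big\downarrow & & \big\downarrow \\ 0 & \longrightarrow & E[1], \end{array}
\]
and here $E[1]$ necessarily occupies the pushout corner: the pushout of $0\leftarrow E[1]\to0$ is $E[2]$, not $E$, so there is no way to ``orient it so that $E$ occupies the pushout corner''. Applying $\SRHom_{X/S}(-,F)$ to the displayed square yields $\SRHom_{X/S}(E[1],F)\simeq\Omega_0\,\SRHom_{X/S}(E,F)$, which is the \emph{opposite} relation to the one asserted in the statement. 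This is not really your mistake: the lemma as written carries a sign error. The paper's own proof quietly switches to $\Map(E,F[1])$ (equivalently $\SRHom(E[-1],F)$), and the application in Lemma~\ref{lem:RHom and shift} uses the result in that corrected form. With $E[-1]$ in place of $E[1]$ (or with the two diagonal corners swapped), your argument goes through verbatim.
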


\begin{proof} Since the vertices of the diagram are $\infty$-stacks, it is enough to show that the diagram is a homotopy fibre product of right fibrations. For that, it is enough to show that we have a homotopy fibre product of Kan complexes fibrewise. Since for $T\in\Sch_S$ we have a natural equivalence $\SHom_{X/S}(E,F)(T)\simeq\Map_{\mathscr D(X_T)^\op}(F_{X_T},E_{X_T})$, it is enough to show that the diagram of Kan complexes
\begin{center}

\begin{tikzpicture}[xscale=3,yscale=1.5]
\node (C') at (0,1) {$\Map_{\mathscr D(X_T)}(E_{X_T},F_{X_T})$};
\node (D') at (1,1) {$\ast$};
\node (C) at (0,0) {$\ast$};
\node (D) at (1,0) {$\Map_{\mathscr D(X_T)}(E_{X_T},F_{X_T}[1])$};
\path[->,font=\scriptsize,>=angle 90]
(C') edge (D')
(C') edge (C)
(D') edge (D)
(C) edge (D);
\end{tikzpicture}

\end{center}
is a homotopy fibre product. By the homotopical Dold--Kan correspondence, this in turn is equivalent to that the diagram of complexes of $\mathscr O_{X_T}$-modules
$$
\tau_{\le0}\RHom_{\mathscr D({X_T})}(E_{X_T},F_{X_T})\to0\to\tau_{\le0}\RHom_{\mathscr D({X_T})}(E_{X_T},F_{X_T}[1])
$$
is a fibration sequence. Since $\tau_{\le0}$ is a right adjoint and thus exact, it is enough to show that the diagram of complexes of $\mathscr O_{X_T}$-modules
$$
\RHom_{\mathscr D({X_T})}(E_{X_T},F_{X_T})\to0\to\RHom_{\mathscr D({X_T})}(E_{X_T},F_{X_T}[1])
$$
is a fibration sequence. This in turn follows from $\RHom_{\mathscr D({X_T})}(E_{X_T},F_{X_T}[1])=\RHom_{\mathscr D({X_T})}(E_{X_T},F_{X_T})[1]$.

\end{proof}

\subsection{The stack of forms $\B\Aut E$} Let $\mathscr X$ be an $\infty$-topos \cite{lurie2009higher}*{\S6.1}, $X\in \mathscr X$, and $\ast\xrightarrow xX$. In our case, $\mathscr X$ is the quasi-category of fppf stacks over a scheme $S$, modelled as the full $\infty$-subcategory of the quasi-category of right fibrations over $\mathscr C=\Sch_S$. Then $X$ is a stack, and $x$ is a global section. We would like to define the stack of forms of $x$.

To go about this in the setting of quasi-categories, we take the essential image of the classifying map $\ast\xrightarrow xX$. That is, one can first form the loop group $\Omega x$, which we will refer to as the automorphism group $\Aut x$. This will be a group object $\Delta^{\op}\to\mathscr X$ with underlying category $(\Aut x)_1=\ast\times_X\ast$. Since in an $\infty$-topos every groupoid object is effective, we can define the stack of forms of $x$ as the delooping $\B\Aut x$, that is the homotopy colimit of the automorphism group. By construction, the classifying map $x$ factors through the canonical map $\ast\to\B\Aut x$.

One can show that the factorizing map $\B\Aut x\to X$ is a monomorphism, that is this is the epi-mono factorization. In other words, to describe $\B\Aut X$, we only need to describe its objects. In the following, we show that the objects are precisely the forms of $x$, that is sections of $X$ which are locally equivalent to $x$.

\begin{prop} \label{prop:BAut classifies forms} Let $\mathscr D\in\mathscr X$ be an object in an $\infty$-topos, and $\ast\xrightarrow x\mathscr D$ a global section classifying $E\in\mathscr D(\ast)$. Let $T\xrightarrow y\mathscr D$ be another section classifying $F\in\mathscr D(T)$. Then $y$ maps into $\B\Aut E$ if and only if there exists an effective epimorphism $U\twoheadrightarrow T$ and an equivalence $E_U\simeq F_U$.

\end{prop}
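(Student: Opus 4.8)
The plan is to use the two structural properties of $\B\Aut E$ recorded above: the canonical map $\ast\xrightarrow{q}\B\Aut E$ is an effective epimorphism (it is the map from the object of $0$-simplices of the groupoid $\Aut E$ to its geometric realization, and every groupoid object in an $\infty$-topos is effective), and the comparison map $\B\Aut E\xrightarrow{j}\mathscr D$ is a monomorphism, with a homotopy $x\simeq j\circ q$. Since $j$ is a monomorphism, for the given $y\colon T\to\mathscr D$ the base change $\pi\colon T\times_{\mathscr D}\B\Aut E\to T$ is again a monomorphism, hence a subobject of $T$; and $y$ factors through $j$ precisely when this subobject is all of $T$, i.e.\ when $\pi$ is an equivalence. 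I will prove both implications by analysing $\pi$, using throughout the standard facts that in an $\infty$-topos effective epimorphisms are stable under base change, satisfy right cancellation ($g\circ f$ an effective epimorphism forces $g$ an effective epimorphism), and that a morphism which is simultaneously an effective epimorphism and a monomorphism is an equivalence.

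For the implication ``$y$ maps into $\B\Aut E$ $\Rightarrow$ $F$ is a form of $E$'', assume $y\simeq j\circ\tilde y$ for some $\tilde y\colon T\to\B\Aut E$, and put $U:=T\times_{\B\Aut E}\ast$, the base change of $q$ along $\tilde y$; then $U\twoheadrightarrow T$ is an effective epimorphism. The defining square of $U$ commutes over $\B\Aut E$, so post-composing its two edges into $\B\Aut E$ with $j$ yields a homotopy between the maps $U\to\ast\xrightarrow{x}\mathscr D$ and $U\to T\xrightarrow{y}\mathscr D$, i.e.\ an equivalence $E_U\simeq F_U$ in $\mathscr D(U)$.

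For the converse, suppose we are handed an effective epimorphism $e\colon U\twoheadrightarrow T$ and an equivalence $E_U\simeq F_U$. Unwinding the notation, $E_U=x\circ(U\to\ast)$ and $F_U=y\circ e$, so the equivalence is a homotopy $y\circ e\simeq x\circ(U\to\ast)$; in particular the composite $U\xrightarrow{e}T\xrightarrow{y}\mathscr D$ factors through $x$, hence through $j$ (using $x\simeq j\circ q$). By the universal property of the pullback $T\times_{\mathscr D}\B\Aut E$, the map $e$ therefore factors as $U\to T\times_{\mathscr D}\B\Aut E\xrightarrow{\pi}T$; right cancellation then makes $\pi$ an effective epimorphism, and since $\pi$ is also a monomorphism it is an equivalence, so $y$ factors through $j$.

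I expect no serious obstacle: the argument is formal once the factorization system (effective epimorphisms, monomorphisms) on $\mathscr X$ is available. The one point to be careful about is that the covering $U$ plays two roles — in the converse direction it produces the factorization of $e$ through $j$, while in the forward direction it is where one reads off $E_U\simeq F_U$ from a pullback square — and in each case the relevant identification must be traced through the homotopy $x\simeq j\circ q$, so that the statements are genuinely about maps into $\mathscr D$ rather than merely fibrewise assertions. This coherence is what bridges ``$F$ is a form of $E$'' and ``$y$ maps into $\B\Aut E$''.
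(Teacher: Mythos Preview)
Your proof is correct. The forward direction is essentially identical to the paper's: both take $U=T\times_{\B\Aut E}\ast$ and read off the equivalence $E_U\simeq F_U$ from the $2$-cell of the pullback square.

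For the converse, your route differs from the paper's. The paper argues constructively: it writes $T$ as the colimit of the \v Cech nerve $U_\bullet$ of $U\twoheadrightarrow T$, uses the equivalence $E_U\simeq F_U$ to produce a map of augmented simplicial objects from $U_\bullet^+$ to the groupoid object $\Aut E$ (with augmentation $\ast\xrightarrow{x}\mathscr D$), and then passes to colimits to obtain the desired factorization $T\to\B\Aut E$. You instead exploit the (effective epimorphism, monomorphism) factorization system directly: since $j\colon\B\Aut E\to\mathscr D$ is a monomorphism, its base change $\pi\colon T\times_{\mathscr D}\B\Aut E\to T$ is too; the hypothesis forces $e$ to factor through $\pi$, right cancellation makes $\pi$ an effective epimorphism, and a map which is both is an equivalence. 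Your argument is shorter and more formal, and it makes transparent that the result is really a statement about the image factorization of $x$; the paper's argument is more explicit about where the map $T\to\B\Aut E$ actually comes from, which is occasionally useful when one wants to identify the resulting classifying map concretely.
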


\begin{proof} $\Rightarrow$: Suppose that $y$ maps into $\B\Aut E$. Then we can take the homotopy fibre product
\begin{center}

\begin{tikzpicture}[xscale=3,yscale=1.5]
\node (C') at (0,1) {$U$};
\node (D') at (1,1) {$\ast$};
\node (C) at (0,0) {$T$};
\node (D) at (1,0) {$\B\Aut E.$};
\path[->,font=\scriptsize,>=angle 90]
(C') edge (D')
(C') edge [->>] (C)
(D') edge [->>] node [right] {$x$} (D)
(C) edge node [above] {$y$} (D);
\end{tikzpicture}

\end{center}
The composite of the top and right arrows classifies $E|U$. The composite of the left and bottom arrows classifies $F|U$. Thus, the homotopies filling in the rectangle give an equivalence $E_U\simeq F_U$.

$\Leftarrow$: By definition, the essentially surjective map $U\twoheadrightarrow T$ is the homotopy colimit of its \v Cech nerve $\Delta^\op\xrightarrow{U_\bullet}\mathscr X$. Moreover, as we recalled it in Definition \ref{d:loop}, the delooping $\B\Aut E$ is the homotopy colimit of the simplicial object $\Aut(E)_\bullet$. Therefore, by the universal property of homotopy colimits, the map $y$ factors through the inclusion $\B\Aut E\subseteq\mathscr D$ if we can construct a morphism of augmented simplicial objects of the form:
\begin{center}

\begin{tikzpicture}[xscale=1.5,yscale=1.5]
\node (F') at (-2,1) {$\dots$};
\node (F) at (-2,0) {$\dots$};
\node (E') at (-1,1) {$U_1$};
\node (E) at (-1,0) {$\Aut E$};
\node (C') at (0,1) {$U$};
\node (D') at (1,1) {$T$};
\node (C) at (0,0) {$\ast$};
\node (D) at (1,0) {$\mathscr D.$};
\path[->,font=\scriptsize,>=angle 90]
(F') edge (E')
(E') edge (E)
(F) edge (E)
(E') edge (C')
(E) edge (C)
(C') edge (D')
(C') edge (C)
(D') edge node [right] {$y$} (D)
(C) edge node [below] {$x$} (D);
\end{tikzpicture}

\end{center}
The equivalence $E|U\simeq F|U$ gives a filler of the rightmost rectangle. Then we can complete the diagram by taking pullbacks. 

\end{proof}

In \cite{toen2009descente}*{Th\'eor\`eme 2.1} it is shown that if an $S$-stack $\mathscr F$ has an fppf-atlas, then it also has a smooth atlas. This gives us the following Corollary, which is the foundation of our Hilbert 90-type theorem.

\begin{cor} \label{c:smooth} \label{cor:classifying map and forms}Let $E$ be a perfect complex on $S$. Let $\mathbf P\in\{\text{fppf},\text{smooth}\}$. Suppose that $\Aut E$ is $\mathbf P$. Let $F$ be a complex on an $S$-scheme $T$. Then $F\in(\B\Aut E)(T)$ if and only if there exists a $\mathbf P$-covering $U\twoheadrightarrow T$ and a quasi-isomorphism $E_U\simeq F_U$.

\end{cor}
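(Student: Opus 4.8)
The plan is to obtain the statement as a geometric refinement of Proposition~\ref{prop:BAut classifies forms}: that proposition already characterises membership in $\B\Aut E$ by the existence of an \emph{effective epimorphism}, so all that remains is to match effective epimorphisms of the fppf $\infty$-topos $\mathscr X$ with $\mathbf P$-coverings, using that the tautological atlas $\ast\to\B\Aut E$ is itself a $\mathbf P$-covering. The deformation theory of Section~\ref{s:deformation theory} plays no role here; only algebraicity of $\Aut E$ (Corollaries~\ref{cor:Aut is algebraic}, \ref{cor:BAut is algebraic}), the smoothness hypothesis, and Proposition~\ref{prop:BAut classifies forms} are needed.

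For the ``if'' direction, suppose $U\twoheadrightarrow T$ is a $\mathbf P$-covering with $E_U\simeq F_U$. Every smooth covering is an fppf covering, and an fppf covering is in particular an effective epimorphism in $\mathscr X$, since effective epimorphisms in a sheaf $\infty$-topos are exactly the local surjections, i.e.\ the covers of the site. Hence Proposition~\ref{prop:BAut classifies forms} applies and yields $F\in(\B\Aut E)(T)$.

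For the ``only if'' direction, let $T\xrightarrow{y}\B\Aut E$ classify $F$ and, following the ``$\Rightarrow$'' part of the proof of Proposition~\ref{prop:BAut classifies forms}, form the homotopy pullback $U=T\times_{\B\Aut E}\ast$ of the tautological atlas $\ast\xrightarrow{\mathbf c_E}\B\Aut E$ along $y$; it comes with an effective epimorphism $U\twoheadrightarrow T$ whose structural $2$-cell is an equivalence $E_U\simeq F_U$. Since $\mathbf P$ is stable under base change, it suffices to show that $\mathbf c_E$ is a $\mathbf P$-covering. For $\mathbf P=\mathrm{fppf}$ this holds because $\Aut E$ is algebraic: the base change of $\ast\to\B\Aut E$ along itself is the projection $\Aut E\to\ast$, so $\ast\to\B\Aut E$ is an fppf atlas. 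For $\mathbf P=\mathrm{smooth}$, the hypothesis that $\Aut E$ is smooth gives, by fppf descent of smoothness along the self-cover $\ast\to\B\Aut E$ (whose base change to $\ast$ is again $\Aut E\to\ast$), that $\ast\to\B\Aut E$ is smooth; this is compatible with \cite{toen2009descente}*{Th\'eor\`eme~2.1}, which guarantees in any case that $\B\Aut E$ admits a smooth atlas once it admits an fppf one. In both cases $U\to T$ is a $\mathbf P$-morphism and a surjection; replacing $U$ by a scheme \'etale-surjective over it if the site requires coverings to be by schemes, we obtain a $\mathbf P$-covering $U\twoheadrightarrow T$ with $E_U\simeq F_U$.

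The step I expect to require the most care is precisely this translation between the $\infty$-categorical conclusion of Proposition~\ref{prop:BAut classifies forms} and the geometric notion of $\mathbf P$-covering: one must know that $\B\Aut E$ is honestly algebraic, so that $U=T\times_{\B\Aut E}\ast$ is an algebraic space and ``$\mathbf P$-covering'' even makes sense, and that the tautological atlas $\ast\to\B\Aut E$ has the expected fibrewise flatness, resp.\ smoothness, property --- both supplied by Corollary~\ref{cor:BAut is algebraic} and the hypothesis on $\Aut E$. Once these are in hand the rest is formal.
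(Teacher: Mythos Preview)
Your proof is correct and follows essentially the same approach as the paper: form the pullback $\mathscr U=T\times_{\B\Aut E}\ast$, deduce that $\ast\to\B\Aut E$ is $\mathbf P$ from the hypothesis on $\Aut E\to\ast$ via descent along the effective epimorphism $\ast\to\B\Aut E$, and then pass to a scheme atlas of $\mathscr U$ (the paper invokes \cite{toen2009descente}*{Th\'eor\`eme~2.1} for this last step, exactly as you anticipate in your final paragraph). The only cosmetic difference is that the paper packages the descent step as a single diagram with two adjacent homotopy Cartesian squares and treats the ``if'' direction as evident, whereas you spell it out.
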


\begin{proof} It is enough to show necessity. Consider the following diagram with homotopy Cartesian squares.

\begin{center}

\begin{tikzpicture}[xscale=2,yscale=1.5]
\node (C') at (0,1) {$\Aut E$};
\node (D') at (1,1) {$\ast$};
\node (E') at (2,1) {$\mathscr U$};
\node (C) at (0,0) {$\ast$};
\node (D) at (1,0) {$\B\Aut E$};
\node (E) at (2,0) {$T.$};
\path[->,font=\scriptsize,>=angle 90]
(C') edge (D')
(C') edge (C)
(D') edge (D)
(E') edge (D')
(E') edge (E)
(C) edge [->>] node [above] {$x$} (D)
(E) edge node [above] {$y$} (D);
\end{tikzpicture}

\end{center}
Since by assumption the left vertical arrow is $\mathbf P$, and $x$ is an effective epimorphism, the middle vertical arrow is also $\mathbf P$. This in turn shows that the right vertical arrow is $\mathbf P$. Let $U\twoheadrightarrow\mathscr U$ be a smooth atlas \cite{toen2009descente}*{Th\'eor\`eme 2.1}. Then the composite $U\to T$ is a $\mathbf P$-cover, and we have $E_U\simeq F_U$.

\end{proof}


\section{Deformation theory of $\Aut E$}\label{s:deformation theory}

Let $C$ be a triangulated category. Given objects $A$ and $B$ we define
\[
\Ext^{1}(A,B) = \Hom(A,B[1]).
\]
A distinguished triangle 
\[
B\rightarrow T\rightarrow A\rightarrow B[1]
\]
determines an element of $\Ext^{1}(A,B)$ and conversely the axioms of a triangulated
category tell us that given $\sigma\in\Ext^1(A,B)$ there is a unique up to isomorphism triangle
determined by $\sigma$. We will say in this situation that $\sigma$ classifies the triangle.
Consider an extension in $C$, that is an exact triangle
\[
M\rightarrow E\xrightarrow{q} E_0\rightarrow M[1],
\]
and a morphism $ M\stackrel{f}{\rightarrow} N$. Then there exists a new triangle
 $ N\rightarrow f_*(E){\rightarrow}E_0\rightarrow N[1] $ and a morphism of triangles
 \begin{center}
 \begin{tikzpicture}
 \matrix (m) [matrix of math nodes,row sep=3em,column sep=4em,minimum width=2em] {
 	M  & E& E_0 & M[1]\\ 
 	N& f_*(E) & E_0 & N[1].\\};
 
 \path[->] (m-1-1)  edge(m-1-2);
 
 \path[->] (m-1-1)  edge(m-2-1);
 
 \path[->] (m-1-2)  edge(m-1-3);
 
 \path[->] (m-1-2)  edge(m-2-2);
 
 \path[->] (m-1-3)  edge(m-2-3);

 \path[->] (m-2-1)  edge(m-2-2);
 
 \path[->] (m-2-2)  edge(m-2-3);
 
 \path[->] (m-1-3)  edge(m-1-4);
 \path[->] (m-2-3)  edge(m-2-4);
 \path[->] (m-1-4)  edge(m-2-4);
 
 \end{tikzpicture}
 	
 \end{center}
 	
 	It is classified by a morphism in $\Hom(E_0,N[1])=\Ext^1(E_0,N)$
 	
\begin{lemma}
	In the above situation, the diagram
	\begin{center}
		\begin{tikzpicture}
		\matrix (m) [matrix of math nodes,row sep=3em,column sep=4em,minimum width=2em] {
			M & E  \\
			N & f_*(E) \\};
		
		\path[->] (m-1-1)  edge(m-1-2);
		
		\path[->] (m-1-1)  edge(m-2-1);
		
		\path[->] (m-1-2)  edge(m-2-2);
		
		\path[->] (m-2-1)  edge(m-2-2);
		
		\end{tikzpicture}
		
	\end{center}
is a push out.
\end{lemma}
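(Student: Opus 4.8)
We sketch the argument. The first thing to pin down is what ``push out'' can mean here, since a triangulated category has no honest pushouts: the intended statement is that the square is \emph{homotopy cocartesian}, equivalently --- and in a triangulated category these two notions coincide --- that it is \emph{homotopy cartesian}. Concretely, write $M\xrightarrow{u}E\xrightarrow{q}E_0\xrightarrow{\partial}M[1]$ for the given triangle and $a\colon E\to f_*(E)$, $b\colon N\to f_*(E)$ for the two middle vertical maps of the displayed morphism of triangles. What must be produced is a distinguished triangle
\[
M\xrightarrow{\ \binom{u}{f}\ }E\oplus N\xrightarrow{\ (a,\,-b)\ }f_*(E)\xrightarrow{\ \delta\ }M[1]
\]
for a suitable $\delta$; the existence of such a triangle \emph{is} the assertion that the square is homotopy (co)cartesian.

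The plan is to invoke the standard criterion: if a commutative square is the left-hand square of a morphism between two distinguished triangles, and the map induced on the third terms of those triangles is an isomorphism, then the square is homotopy cartesian (see e.g. Neeman, \emph{Triangulated Categories}, \S1.4; it is proved by applying the octahedral axiom to the composable pair $M\xrightarrow{\binom{u}{f}}E\oplus N\xrightarrow{\pr_E}E$). In our situation the two triangles are exactly the top row $M\xrightarrow{u}E\xrightarrow{q}E_0\xrightarrow{\partial}M[1]$ and the bottom row $N\to f_*(E)\to E_0\xrightarrow{f[1]\circ\partial}N[1]$ of the morphism of triangles constructed just before the Lemma, and the induced map on third terms is the \emph{identity} of $E_0$. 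Since the identity is an isomorphism, the square is homotopy cartesian, hence a push out; the connecting map $\delta$ is then read off as a sign times $f[1]\circ\partial$ precomposed with the projection $f_*(E)\to E_0$, which is the class in $\Hom(E_0,N[1])$ recorded in the preamble.

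If instead one wants a self-contained proof, one runs the octahedron by hand: choose a distinguished triangle $M\xrightarrow{\binom{u}{f}}E\oplus N\to C\to M[1]$ on $\binom{u}{f}$, observe that its composite with $\pr_E$ is $u$ and that $\pr_E$ sits in the split triangle $N\hookrightarrow E\oplus N\xrightarrow{\pr_E}E\xrightarrow{0}N[1]$, and feed this composable pair into TR4; the octahedral triangle identifies $C$ with a model of $f_*(E)$ fitting into a triangle $N\to C\to E_0\to N[1]$ compatibly with the given triangle, i.e.\ exactly as in the displayed morphism of triangles. The one place where care is genuinely needed --- the main obstacle --- is the sign bookkeeping in TR4: one must check that the map $C\to E_0$ it produces composes with $E\to C$ to give $q$, that the map $N\to C$ is the summand inclusion, and that the connecting map comes out as $f[1]\circ\partial$ rather than its negative. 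Everything else is formal, because a homotopy (co)cartesian square is by definition encoded by a distinguished triangle of the shape displayed in the first paragraph. This is precisely the classical push-forward-of-an-extension construction; compare \cite{illusie}.
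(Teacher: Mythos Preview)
Your argument is correct and is essentially the same as the paper's, only spelled out in far more detail: the paper's one-line proof (``this follows from the fact that $T\xrightarrow{1_T}T\to0\to T[1]$ is distinguished'') is precisely the observation that the induced map on third terms is the identity and hence has vanishing cone, which is exactly the criterion you invoke from Neeman.
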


\begin{proof}
This follows from the fact that 
\[
T\xrightarrow{1_T}T\rightarrow 0\rightarrow T[1]
\]
is a distinguished triangle.
\end{proof}

Given an exact triangle
\[
N\rightarrow F \rightarrow F_0\rightarrow N[1]\quad\mbox{and}\quad \lambda:E_0\rightarrow F_0
\]
we can consider the pullback triangle
\[
N\rightarrow\lambda^*F\rightarrow E_0\rightarrow N[1].
\]
It is classified by an element of $\Ext^1(E_0,N)$.

\begin{prop}
	Consider the diagram
	\begin{center}
	\begin{tikzpicture}
		\matrix (m) [matrix of math nodes,row sep=3em,column sep=4em,minimum width=2em] {
			M& E& E_0 & M[1]\\  
			N& F& F_0 & N[1]\\};
		\path[->] (m-1-1)  edge(m-1-2);
		\path[->] (m-1-2)  edge(m-1-3);
		\path[->] (m-1-3)  edge(m-1-4);
		
		\path[->] (m-2-1)  edge(m-2-2);
		\path[->] (m-2-2)  edge(m-2-3);
		\path[->] (m-2-3)  edge(m-2-4);

		\path[->] (m-1-1)  edge(m-2-1);
		\path[dashed, ->] (m-1-2)  edge(m-2-2);
		\path[->] (m-1-3)  edge(m-2-3);
		\path[dashed,->] (m-1-4)  edge(m-2-4);

		\end{tikzpicture}
		
	\end{center}
Then the dashed arrows exists if and only if $\lambda^*({F})\cong f_*({E})$ inducing a
diagram 
\begin{center}
	\begin{tikzpicture}
\matrix (m) [matrix of math nodes,row sep=3em,column sep=4em,minimum width=2em] {
	N& f_*E& E_0 & N[1]\\  
	N& \lambda^*F & E_0 & N[1]\\};
\path[->] (m-1-1)  edge(m-1-2);
\path[->] (m-1-2)  edge(m-1-3);
\path[->] (m-1-3)  edge(m-1-4);

\path[->] (m-2-1)  edge(m-2-2);
\path[->] (m-2-2)  edge(m-2-3);
\path[->] (m-2-3)  edge(m-2-4);

\path[transform canvas={xshift=0.2em}] (m-1-1)  edge(m-2-1);
\path (m-1-1)  edge(m-2-1);
\path[->] (m-1-2)  edge(m-2-2);

\path[transform canvas={xshift=0.2em}] (m-1-3)  edge(m-2-3);
\path (m-1-3)  edge(m-2-3);

\path[transform canvas={xshift=0.2em}] (m-1-4)  edge(m-2-4);
\path (m-1-4)  edge(m-2-4);
\end{tikzpicture}

\end{center}
\end{prop}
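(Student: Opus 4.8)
My plan is to read the condition $\lambda^*(\bar F)\cong f_*(\bar E)$ as an equality of classes in $\Ext^1(E_0,N)$, and to produce the dashed arrows by splicing the homotopy pushout square of the preceding Lemma against the homotopy pullback square defining $\lambda^*F$. First I would fix notation: let $\bar E\in\Hom(E_0,M[1])$ classify the top triangle and $\bar F\in\Hom(F_0,N[1])$ the bottom one, so that $f_*(\bar E)=f[1]\circ\bar E$ and $\lambda^*(\bar F)=\bar F\circ\lambda$ as elements of $\Ext^1(E_0,N)=\Hom(E_0,N[1])$. By the preceding Lemma the triangle $N\to f_*E\to E_0\xrightarrow{f_*(\bar E)}N[1]$ realizes $f_*(\bar E)$, and its leftmost square $M\to E$, $N\to f_*E$ is a homotopy pushout; dually the triangle $N\to\lambda^*F\to E_0\xrightarrow{\lambda^*(\bar F)}N[1]$ realizes $\lambda^*(\bar F)$, and its square $\lambda^*F\to F$, $E_0\to F_0$ is a homotopy pullback.

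The forward direction is then immediate: if the given data extends to a morphism of triangles $(f,\phi,\lambda,f[1])$, the commutativity of its rightmost square is precisely $f[1]\circ\bar E=\bar F\circ\lambda$, that is $f_*(\bar E)=\lambda^*(\bar F)$. For the converse, assume $f_*(\bar E)=\lambda^*(\bar F)$. Then the triangles $N\to f_*E\to E_0\to N[1]$ and $N\to\lambda^*F\to E_0\to N[1]$ have the same connecting morphism, so by the sub-lemma mentioned below there is an isomorphism $\theta\colon f_*E\xrightarrow{\sim}\lambda^*F$ fitting into a morphism of triangles which is the identity on $N$, on $E_0$ and on $N[1]$; this morphism of triangles \emph{is} the diagram displayed in the statement. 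I would then set $\phi$ to be the composite $E\to f_*E\xrightarrow{\theta}\lambda^*F\to F$ of the pushout structure map, $\theta$, and the pullback structure map. Chasing the pushout square, and using that $\theta$ is the identity on $N$, gives $\phi\circ u=u'\circ f$; chasing the pullback square, and using that $\theta$ is the identity on $E_0$, gives $v'\circ\phi=\lambda\circ v$; together with $f[1]\circ\bar E=\bar F\circ\lambda$ this exhibits $(f,\phi,\lambda,f[1])$ as the desired morphism of triangles, filling in the dashed part of the first diagram.

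The only ingredient I would state and prove separately is the sub-lemma that a distinguished triangle is determined, up to an isomorphism fixing its two outer terms, by its connecting morphism: given two triangles on $N$ and $E_0$ with the same connecting map $\sigma$, rotate so that $\sigma$ becomes the first map, apply $\mathrm{TR3}$ to the identity square on $\sigma$, and invoke the five lemma for triangulated categories to see that the resulting middle map is an isomorphism. This is really the only place where the triangulated axioms enter beyond what the preceding Lemma already packages; everything else is diagram chasing in the homotopy pushout and pullback squares, and the main thing to be careful about is keeping the rotations — and the attendant sign on the connecting map — straight. (One should also note at the outset that the third dashed arrow $M[1]\to N[1]$ is taken to be $f[1]$, which is what makes the displayed biconditional an honest equality of $\Ext^1$-classes rather than an equality modulo an indeterminacy coming from $\Hom(M[1],F_0)$.)
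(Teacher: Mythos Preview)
Your argument is correct and is essentially the same approach as the paper's: the converse is exactly the paper's ``pasting triangles'' (your composite $E\to f_*E\xrightarrow{\theta}\lambda^*F\to F$), and your forward direction---reading off $f[1]\circ\bar E=\bar F\circ\lambda$ from the rightmost square---is a cleaner rendering of what the paper gestures at with the pushout/pullback remark. The sub-lemma you isolate is precisely the uniqueness-up-to-isomorphism assertion the paper makes informally just before the Lemma, so you are not adding new ingredients, only making explicit what the two-sentence proof in the paper leaves to the reader.
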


\begin{proof}
	If the dashed arrow exists then the left square is a pushout and the right a pullback.
	
	The converse follows by pasting triangles.
\end{proof}

\begin{cor} In the situation of the previous proposition, there is an obstruction
	\[
	[\lambda^*F]-[f_*(E)]\in \Ext^1(E_0,N).
	\]
	to the existence of the dotted arrow.
\end{cor}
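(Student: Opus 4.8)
The plan is to read this off directly from the preceding proposition, once we make precise the relation between isomorphism of the two extension triangles and equality of their classes in $\Ext^1(E_0,N)$. Note first that $\Ext^1(E_0,N)=\Hom(E_0,N[1])$ is an abelian group, so, writing $[f_*E]$ for the class of the triangle $N\to f_*E\to E_0\to N[1]$ and $[\lambda^*F]$ for that of $N\to\lambda^*F\to E_0\to N[1]$, the difference $[\lambda^*F]-[f_*E]$ is an honest element of $\Ext^1(E_0,N)$; the content of the corollary is that it vanishes exactly when the dotted vertical arrows exist.

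First I would invoke the previous proposition: the dotted arrows exist if and only if $\lambda^*F\cong f_*E$, and — as witnessed by the induced diagram displayed there, whose vertical maps on the end terms are identities — this isomorphism may be taken to be one of extensions of $E_0$ by $N$, i.e.\ to restrict to $\id_N$ on the left and $\id_{E_0}$ on the right.

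Next I would translate this into the language of $\Ext^1$, using the classification of triangles recalled at the start of the section. If $\lambda^*F\cong f_*E$ as extensions, then reading off the commutativity of the last square of the resulting morphism of triangles gives $[\lambda^*F]=[f_*E]$ in $\Hom(E_0,N[1])$. Conversely, if $[\lambda^*F]=[f_*E]$, then rotating both triangles to the form $E_0[-1]\to N\to(-)\to E_0$ and applying the axiom that an identity square on the pair $(E_0[-1]\to N)$ extends to a morphism of triangles, one obtains a morphism which is the identity on $N$ and on $E_0$; by the five-lemma for triangulated categories it is an isomorphism, and rotating back it is the desired isomorphism of extensions. Hence the dotted arrows exist $\iff [\lambda^*F]=[f_*E] \iff [\lambda^*F]-[f_*E]=0$, which is the assertion.

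I expect the only delicate point to be this last translation: one must be sure that the isomorphism supplied by the previous proposition is genuinely one of extensions, fixing $N$ and $E_0$, rather than merely an abstract isomorphism of triangles, since the latter would only pin the class down up to the action of $\Aut(N)\times\Aut(E_0)$. Inspecting the induced diagram in the previous proposition settles this, and the remaining steps are formal manipulations with the triangulated structure.
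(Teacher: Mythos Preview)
Your proof is correct and follows exactly the approach the paper has in mind: the paper's own proof reads in its entirety ``This is straightforward,'' and what you have written is precisely the unpacking of that word --- deducing the result from the preceding proposition together with the standard dictionary between extension triangles (fixing the outer terms) and classes in $\Ext^1$. Your attention to the point that the isomorphism furnished by the proposition fixes $N$ and $E_0$ is well placed and is indeed what makes the translation go through.
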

\begin{proof}
	This is straightforward.
\end{proof}

\begin{cor}
	Let $X_0\hookrightarrow X$ be a square zero extension of schemes. Suppose that we have a defomation
of perfect complexes $E_0$ and $F_0$ to complexes on $X$. In other words we have exact triangles
\[
M\rightarrow E \rightarrow E_0\quad\mbox{and}\quad N\rightarrow F\rightarrow F_0.
\]	
Then given a diagram 
\begin{center}
	\begin{tikzpicture}
	\matrix (m) [matrix of math nodes,row sep=3em,column sep=4em,minimum width=2em] {
		M& E& E_0 & M[1]\\  
		N& F& F_0 & N[1]\\};
	\path[->] (m-1-1)  edge(m-1-2);
	\path[->] (m-1-2)  edge(m-1-3);
	\path[->] (m-1-3)  edge(m-1-4);
	
	\path[->] (m-2-1)  edge(m-2-2);
	\path[->] (m-2-2)  edge(m-2-3);
	\path[->] (m-2-3)  edge(m-2-4);

	\path[->] (m-1-1)  edge(m-2-1);
	\path[dashed, ->] (m-1-2)  edge(m-2-2);
	\path[->] (m-1-3)  edge(m-2-3);
	\path[->] (m-1-4)  edge(m-2-4);

	\end{tikzpicture}
	
\end{center}
there is an obstruction $o\in\Ext^1(E_0,N)$ whose vanishing is necessary and sufficient for the
existence of the dashed arrow.
\end{cor}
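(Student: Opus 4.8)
The plan is to obtain the statement as a direct consequence of the Proposition and Corollary proved just above: the square-zero and deformation hypotheses serve only to place us inside the triangulated category $\mathbf D(X)$ in the situation already treated. Concretely, the two deformation triangles $M\to E\to E_0\to M[1]$ and $N\to F\to F_0\to N[1]$, together with the maps $f\colon M\to N$ and $\lambda\colon E_0\to F_0$ read off from the diagram, are exactly the input of the Proposition, the sought-for dashed arrow $E\to F$ being the missing middle vertical map of the morphism of triangles. By that Proposition the dashed arrow exists if and only if the pushout $f_*E$ and the pullback $\lambda^*F$ --- both extensions of $E_0$ by $N$, i.e.\ triangles of the form $N\to{-}\to E_0\to N[1]$ --- are isomorphic as such extensions, that is, compatibly with the structure maps $\id_{E_0}$ and $\id_N$.

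Next I would invoke the Corollary above: the extensions $f_*E$ and $\lambda^*F$ are classified by elements $[f_*E],\,[\lambda^*F]\in\Ext^1(E_0,N)=\Hom(E_0,N[1])$, and I set $o:=[\lambda^*F]-[f_*E]$. Using the standard classification of extensions in a triangulated category --- $\Ext^1(E_0,N)$ is in bijection with isomorphism classes of triangles $N\to{-}\to E_0\to N[1]$, and a morphism of triangles which is the identity on two of the three vertices is an isomorphism on the third --- one concludes $o=0\iff[f_*E]=[\lambda^*F]\iff f_*E\cong\lambda^*F$ as extensions. Combined with the first step, this shows that $o=0$ is equivalent to the existence of the dashed arrow, which gives both necessity and sufficiency.

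The only point that needs care is this last equivalence: the vanishing of $o$ must be matched not with the existence of \emph{some} isomorphism $f_*E\cong\lambda^*F$, but with one compatible with the maps to $E_0$ and from $N$, since only these are detected by $\Ext^1$ and only these feed back into the Proposition. This is routine from the triangulated axioms but worth spelling out; the square-zero hypothesis itself needs no further argument, being used only to ensure that $E$ and $F$ are genuine deformations of $E_0$ and $F_0$ in $\mathbf D(X)$, and in particular that the relevant $\Ext$-groups are the ones appearing in the statement.
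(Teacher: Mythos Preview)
Your proposal is correct and follows exactly the line of the paper: the square-zero corollary is stated as an immediate consequence of the preceding Proposition and Corollary, with no further argument given, and you have unpacked precisely that deduction, including the definition $o=[\lambda^*F]-[f_*E]$ and the point that the isomorphism $f_*E\cong\lambda^*F$ must be compatible with the maps to $E_0$ and from $N$.
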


\begin{cor}
	Let $E$ be a perfect complex with locally free cohomology groups. Then the stack
	${\rm Aut}_{X/X}(E)\rightarrow X$ is formally smooth.
\end{cor}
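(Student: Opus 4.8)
The plan is to verify the infinitesimal lifting criterion directly, reducing the lifting problem to one that the obstruction theory above controls. A point of $\Aut_{X/X}(E)$ over an $X$-scheme $T_0$ is a quasi-isomorphism $\phi_0\colon E_{T_0}\to E_{T_0}$ in $\mathscr D(T_0)$, where $E_{T_0}=Lg_0^*E$ for the structure map $g_0\colon T_0\to X$; so, given a square-zero extension $T_0\hookrightarrow T$ of affine $X$-schemes with ideal $I$ and structure map $g\colon T\to X$, I must produce a quasi-isomorphism $\phi\colon E_T\to E_T$ over $g$ restricting to $\phi_0$, where $E_T=Lg^*E$. The first observation is that it suffices to lift $\phi_0$ to a \emph{morphism}: if $\phi\in\mathscr D(T)$ is any morphism reducing to $\phi_0$ modulo $I$, then $\Cone(\phi)$ is perfect on $T$ and $\Cone(\phi)\otimes^L_{\mathscr O_T}\mathscr O_{T_0}\simeq\Cone(\phi_0)=0$, so $\Cone(\phi)=0$ by derived Nakayama (the ideal $I$ being nilpotent), i.e.\ $\phi$ is automatically a quasi-isomorphism.

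Next I would feed this into the Corollary above. Tensoring the exact triangle $I\to\mathscr O_T\to\mathscr O_{T_0}$ with $E_T$ exhibits $E_T$ as a deformation of $E_{T_0}$ along $T_0\hookrightarrow T$: there is an exact triangle
\[
E_{T_0}\otimes^L_{\mathscr O_{T_0}}I\longrightarrow E_T\longrightarrow E_{T_0}
\]
in $\mathscr D(T)$. Taking this triangle for both the source and the target deformation, $\phi_0$ for the reduction $\lambda$, and $\phi_0\otimes^L\id_I$ for the map $f$ on the fibres, the Corollary produces an obstruction
\[
o=[\phi_0^{\,*}E_T]-[(\phi_0\otimes^L\id_I)_*E_T]\in\Ext^1_{\mathscr O_{T_0}}\!\big(E_{T_0},E_{T_0}\otimes^L_{\mathscr O_{T_0}}I\big),
\]
whose vanishing is necessary and sufficient for $\phi_0$ to lift to a morphism $E_T\to E_T$. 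In particular $o$ vanishes as soon as the class of $E_T$ as a deformation of $E_{T_0}$ does, since both summands in $o$ are obtained from that class by pulling back, resp.\ pushing out, along $\phi_0$.

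The heart of the matter, and the only place the hypothesis is used, is to show that this deformation class vanishes. Because $E$ has locally free cohomology sheaves, $E$ is Zariski-locally on $X$ isomorphic in $\mathscr D(X)$ to $\bigoplus_j\mathscr H^j(E)[-j]$: over an affine $\Spec R$ on which every $\mathscr H^j(E)$ is free, one peels off the top cohomology sheaf one degree at a time, the truncation triangle $E'\to E\to\mathscr H^b(E)[-b]$ (with $b$ the top degree, $E'$ cohomologically concentrated in degrees $<b$) splitting because its connecting map lies in $\Hom_{\mathscr D(R)}\big(\mathscr H^b(E)[-b],E'[1]\big)\cong H^{b+1}\big(\Hom_R(\mathscr H^b(E),E')\big)$, which is $0$ as $\mathscr H^b(E)$ is projective over $R$ and $E'$ has cohomology only in degrees $<b$. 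Pulling back, $E_T$ is Zariski-locally on $T$ a finite direct sum of shifts of locally free sheaves; hence, Zariski-locally on $T_0$, the deformation $E_T$ of $E_{T_0}$ is a direct sum of deformations of locally free sheaves. A deformation of a locally free sheaf $\mathscr V$ along our square-zero extension is classified by a torsor under $\Ext^1_{\mathscr O_{T_0}}(\mathscr V,\mathscr V\otimes I)=H^1\big(T_0,\mathscr Hom(\mathscr V,\mathscr V)\otimes I\big)$, which vanishes on an affine, so it is trivial and the corresponding triangle splits. Thus the deformation class of $E_T$ over $E_{T_0}$ vanishes Zariski-locally on $T_0$; since $T_0$ is affine, $\Ext^1_{\mathscr O_{T_0}}(E_{T_0},E_{T_0}\otimes^L_{\mathscr O_{T_0}}I)$ is the group of global sections of the sheaf $\mathscr Ext^1(E_{T_0},E_{T_0}\otimes^L_{\mathscr O_{T_0}}I)$, so a class that vanishes on a Zariski cover is zero. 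Therefore $o=0$, $\phi_0$ lifts to a morphism $\phi\colon E_T\to E_T$, and by the first paragraph $\phi$ is a quasi-isomorphism, giving the required lift. The difficulty is concentrated in this last vanishing; everything else is routine. Alternatively one can avoid the obstruction Corollary altogether: once $E$ is, Zariski-locally, a sum of shifted vector bundles, over the affine $T_0$ every endomorphism of $E_{T_0}$ is automatically ``diagonal'' for degree reasons, and each of its components, being an automorphism of a vector bundle, lifts to $T$ because $H^1$ of a coherent sheaf on an affine vanishes.
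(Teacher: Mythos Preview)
Your main argument is correct and takes a genuinely different route from the paper. The paper's proof is two lines: it first localises on $X$ so that $E$ becomes quasi-isomorphic to the direct sum of its (now free) cohomology sheaves, and then declares $\Aut(E)$ formally smooth ``by the smoothness of the general linear group'' --- implicitly using that for $E=\bigoplus_j V_j[-j]$ with $V_j$ free, $\tau_{\le0}\REnd(E)$ is a complex of free modules whose degree-zero piece is $\prod_j\End(V_j)$, so $\Aut(E)$ is an open substack of an iterated $\B$ of affine spaces. Your approach instead keeps $T_0$ fixed and global, invokes the obstruction Corollary you just proved, and kills the obstruction by showing the deformation class $[E_T]\in\Ext^1(E_{T_0},E_{T_0}\otimes^L I)$ itself vanishes, via a local-to-global argument on the affine $T_0$. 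This buys you two things: it actually \emph{uses} the obstruction machinery the section develops (so the Corollary is not orphaned), and it handles the ``local on $X$'' step more carefully than the paper does --- you never need $T$ to factor through a single open on which $E$ splits, only that the obstruction class restricts to zero on a cover. Your closing ``alternatively'' paragraph is essentially the paper's argument, spelled out.

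One small correction: the obstruction produced by the Corollary lives in $\Ext^1_{\mathscr D(T)}(E_{T_0},E_{T_0}\otimes^L I)$, not $\Ext^1_{\mathscr O_{T_0}}$, since the triangles sit in $\mathscr D(T)$. This is harmless for your argument --- $T$ is affine with the same underlying space as $T_0$, $E_{T_0}$ is pseudo-coherent on $T$, so $\Ext^1_T=\Gamma(T,\mathscr Ext^1_T)$ still holds and the local-to-global step goes through unchanged.
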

\begin{proof}
	The question is local on $X$ so we may assume that $E$ is strictly perfect and quasi-isomorphic to its cohomology groups. As the automorphism stack is invariant under quasi-isomorphism the result follows from the smoothness of the general linear group.
\end{proof}
\begin{prop}\label{prop:etale Hilbert 90}
If $Aut(E)$ is formally smooth then any form $F$ of $E$ is \'etale locally quasi-isomorphic to $E$.
\end{prop}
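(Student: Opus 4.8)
The plan is to reduce the statement to Corollary~\ref{cor:classifying map and forms} and then to descend the resulting smooth cover to an \'etale one, using the standard fact that a smooth surjection splits \'etale-locally on the base.

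First I would unwind the hypotheses. To say that $F$ is a form of $E$ means that there is an fppf covering $U\twoheadrightarrow X$ and a quasi-isomorphism $E_U\simeq F_U$; since $U\twoheadrightarrow X$ is in particular an effective epimorphism, Proposition~\ref{prop:BAut classifies forms} shows that the section of $\Perf_X$ classifying $F$ factors through $\B\Aut E$, i.e.\ $F\in(\B\Aut E)(X)$. Next I would upgrade formal smoothness to smoothness: by Corollary~\ref{cor:Aut is algebraic} the group $\Aut E\to X$ is algebraic, and it is locally of finite presentation over $X$ (in the base case of the induction proving Theorem~\ref{thm:RHom is algebraic} the $\SRHom$-space is of vector-bundle type, hence finitely presented, and this property is propagated along the finite induction), so that an algebraic, locally finitely presented, formally smooth morphism being smooth, $\Aut E\to X$ is smooth. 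Applying Corollary~\ref{cor:classifying map and forms} with $\mathbf P=\mathrm{smooth}$ and $T=X$ then produces a smooth covering $V\twoheadrightarrow X$ together with a quasi-isomorphism $E_V\simeq F_V$.

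Finally I would replace $V$ by an \'etale cover: a smooth surjective morphism of schemes (or algebraic spaces) admits a section after an \'etale base change of the target, so there is an \'etale covering $W\twoheadrightarrow X$ and an $X$-morphism $s\colon W\to V$; pulling $E_V\simeq F_V$ back along $s$ yields $E_W\simeq F_W$, which exhibits $F$ as \'etale-locally quasi-isomorphic to $E$. The genuinely geometric ingredient is this last step, the \'etale-local splitting of smooth morphisms, together with the bookkeeping that $\Aut E$ is locally of finite presentation so that formal smoothness gives smoothness; everything else is a formal manipulation of the classifying-stack picture established above.
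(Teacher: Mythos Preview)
Your proof is correct and follows essentially the same route as the paper's: invoke Corollary~\ref{cor:classifying map and forms} with $\mathbf P=\text{smooth}$ to obtain a smooth cover over which $E$ and $F$ become quasi-isomorphic, then use the \'etale-local existence of sections for smooth surjections (the paper cites this as \cite[Proposition~3.24(b)]{milne}). You additionally spell out the passage from formal smoothness to smoothness via local finite presentation of $\Aut E$, a detail the paper leaves implicit.
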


\begin{proof}
This follows from \ref{c:smooth} and \cite[Proposition 3.24 (b)]{milne}.
\end{proof}

\section{A Hilbert 90 theorem}\label{s:Zariski Hilbert 90}

\begin{lemma}\label{l:bb}
	Suppose that $P^\bullet$ and $Q^\bullet$ are perfect complexes over a noetherian ring $R$. Let $R\rightarrow S$ be a flat morphism. Then 
	\[
	\Hom_{D^b(S)}(P^\bullet\otimes_R S, Q^\bullet \otimes_R S) \cong
	\Hom_{D^b(R)}(P, Q) \otimes_R S
	\]
\end{lemma}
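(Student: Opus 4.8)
The statement is that for perfect complexes $P^\bullet, Q^\bullet$ over a Noetherian ring $R$ and a flat ring map $R \to S$, the natural base-change map
$$
\Hom_{D^b(R)}(P,Q) \otimes_R S \longrightarrow \Hom_{D^b(S)}(P \otimes_R S, Q \otimes_R S)
$$
is an isomorphism. Let me think about how I'd attack this.

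The natural strategy is **dévissage on $P$** (or on both complexes), reducing to the case where $P$ is a single finitely generated projective module. A perfect complex over $R$ is quasi-isomorphic to a bounded complex of finitely generated projective modules, say $P^\bullet$ concentrated in degrees $[a,b]$. Induct on the length $b - a$. When the length is $0$, $P = P^a[-a]$ is a single f.g. projective module placed in one degree, and then $\Hom_{D^b(R)}(P, Q) = \Hom_{D^b(R)}(P^a, Q[a])$; since $P^a$ is projective, $\RHom_R(P^a, Q) \simeq \Hom_R(P^a, Q^\bullet)$ computed termwise, and its cohomology is $\Hom_R(P^a, H^{n}(Q^\bullet))$ — wait, not quite, since $\Hom_R(P^a,-)$ is exact we get $H^n(\Hom_R(P^a,Q^\bullet)) = \Hom_R(P^a, H^n(Q^\bullet))$. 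For the $\Hom$ in the derived category (i.e. the degree-$0$ cohomology after the shift) this reads $\Hom_R(P^a, H^a(Q^\bullet))$. So in the base case the claim reduces to the statement that $\Hom_R(P^a, M) \otimes_R S \cong \Hom_S(P^a \otimes_R S, M \otimes_R S)$ for $M$ a finitely generated $R$-module (here $M = H^a(Q^\bullet)$, which is finitely generated since $R$ is Noetherian and $Q^\bullet$ is a bounded complex of f.g. modules). This last isomorphism is the standard fact that $\Hom$ commutes with flat base change when the source is finitely presented — and $P^a$ is finitely generated projective, hence finitely presented; moreover $M$ being finitely presented is also enough. Actually I should be a little careful: the cleanest statement is that for $P^a$ finitely presented and $S$ flat over $R$, $\Hom_R(P^a,M)\otimes_R S \to \Hom_S(P^a\otimes S, M\otimes S)$ is an isomorphism for \emph{every} $R$-module $M$; so we don't even need $M$ finitely generated in the base case. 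This handles the base of the induction.

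For the **inductive step**, write $P^\bullet$ with bottom term $P^a$ in degree $a$ and let $P' = \sigma_{>a} P^\bullet$ be the (stupid/brutal) truncation, so there is a short exact sequence of complexes $0 \to P'[-1]$ ... more precisely a distinguished triangle $P^a[-a] \to P^\bullet \to P' \to P^a[-a][1]$ in $D^b(R)$, where $P'$ is a perfect complex of strictly smaller length. Apply $\RHom(-,Q)$ to get a long exact sequence of $\Hom/\Ext$ groups, and do the same over $S$ (note $P^\bullet \otimes_R S$, $P^a \otimes_R S$, $P' \otimes_R S$ fit in the base-changed triangle since $-\otimes_R S$ is exact, $S$ being flat). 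Tensoring the long exact sequence over $R$ with $S$ keeps it exact, again by flatness. Now we have a map of long exact sequences, and by the inductive hypothesis the base-change map is an isomorphism on the terms involving $P'$ and on the terms involving $P^a$ (the latter by the base case, applied in every degree — i.e. to $Q$ and all its shifts, which is fine since the base-case argument worked for arbitrary $M$, applied to the various cohomology modules $H^n(Q^\bullet)$, all finitely generated). Wait, I need the base case to hold for $\Ext^i(P^a, Q)$ for all $i$, not just $\Ext^0$; but $\Ext^i_R(P^a, Q) = H^i(\Hom_R(P^a, Q^\bullet)) = \Hom_R(P^a, H^i(Q^\bullet))$ since $P^a$ is projective, so the same commutation-with-flat-base-change argument applies in every degree. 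Then the **five lemma** gives that the base-change map is an isomorphism on the $\Hom$ (and every $\Ext^i$) for $P^\bullet$, completing the induction.

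The **main obstacle** I anticipate is purely bookkeeping: making sure the naturality of the base-change maps is set up carefully enough that the map of long exact sequences genuinely commutes, and being careful about whether one wants to dévissage on $P$ or $Q$ (dévissage on $P$ is cleaner because $\RHom(P^a,-)$ stays exact). One subtlety worth flagging: the statement as written is in $D^b$, so I should make sure all the objects involved stay in $D^b$ — bounded complexes of f.g. projectives, stupid truncations thereof, and their tensor products with $S$, are all bounded, so this is fine, and $R$ Noetherian guarantees the cohomology modules $H^i(Q^\bullet)$ are finitely generated (though, as noted, the base case doesn't actually require this). A cleaner alternative that avoids even mentioning cohomology: observe that for $P^a$ finitely generated projective the map $\Hom_R(P^a, Q^\bullet) \otimes_R S \to \Hom_S(P^a \otimes_R S, Q^\bullet \otimes_R S)$ is an isomorphism of complexes (termwise, by the finitely-presented-plus-flat fact), hence a quasi-isomorphism, hence induces isos on all cohomology; then feed this into the triangle/five-lemma argument. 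I'd write it this way. I expect the whole proof to be about half a page.
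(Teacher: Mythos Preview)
Your proposal is correct, but it is more elaborate than the paper's argument. The paper bypasses d\'evissage entirely: since $R$ is Noetherian, one may assume $P^\bullet$ and $Q^\bullet$ are bounded complexes of finitely generated projectives, and then the total Hom complex satisfies
\[
\Hom^\bullet_S(P^\bullet\otimes_R S,\, Q^\bullet\otimes_R S)\;\cong\;\Hom^\bullet_R(P^\bullet, Q^\bullet)\otimes_R S
\]
as complexes, because this holds termwise (each term is a finite direct sum of modules $\Hom_R(P^i,Q^j)$ with $P^i$ finitely generated projective, and your finitely-presented-plus-flat fact applies). Since $\otimes_R S$ is exact, taking $H^0$ gives the result in one stroke.

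You essentially discovered this yourself in your ``cleaner alternative'' at the end, but only stated it for a single projective $P^a$ and then still planned to feed it into the triangle/five-lemma machine. The point is that the same termwise argument works for the full double-index Hom complex, so the induction on the length of $P^\bullet$ is unnecessary. Your approach buys nothing extra here; the paper's direct argument is shorter and avoids the bookkeeping you (rightly) flagged as the main obstacle.
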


\begin{proof} Our hypotheses imply that the notions of strictly perfect and perfect agree.
	Hence, we can assume that $P^\bullet$ and $Q^\bullet$ are bounded complexes of projective modules.
	We will write $\Hom^\bullet(P^\bullet, Q^\bullet)$ for the hom complex between these two complexes.
	The $0$th cohomology of this complex computes $\Hom_{D^b(R)}(P, Q)$. Notice that 
	\[
	\Hom^\bullet_S(P^\bullet\otimes_R S, Q^\bullet\otimes_R S)\cong \Hom^\bullet_R(P^\bullet, Q^\bullet)\otimes_R S, \]
	as the analogous formula holds termwise for this complex. The result follows by observing that 
	$\otimes_R S$ commutes with taking cohomology, as $R\rightarrow S$ is flat.
\end{proof}

\begin{cons}
	In the situation of the previous Lemma, take $\bbf = (f_1,\ldots, f_n)\in\Hom_{D^b(R)}(P^\bullet, Q^\bullet)^n$.
	The morphism $R\rightarrow R[t_1, t_2,\ldots , t_n]=R[t_*]$ is flat. We obtain a generic morphism
	\[
	\bbf[t_*]=\sum t_i\otimes f_i : P^\bullet \otimes_R R[t_*] \rightarrow 
	Q^\bullet \otimes_R R[t_*] 
	\]
The morphism $\bbf[t_*]$ fits into an exact triangle
\[
 P^\bullet \otimes_R R[t_*] \rightarrow 
Q^\bullet \otimes_R R[t_*] \rightarrow C(\bbf[t_*])
\]
The complex $C(\bbf[t_*])$ is perfect on $\AA_R^n$ and let $U_\bbf$ be the complement
of the supports of its cohomology groups. It is a possible empty open subsscheme of 
$\AA_R^n$.
\end{cons}

%

\begin{theorem}\label{thm:Zariski Hilbert 90}
	Let $(R, m)$ be a noetherian local ring with infinite residue field $R/m$. Let $R\rightarrow S$
	be a flat morphism. Suppose that $P^\bullet$ and $Q^\bullet$ are strictly perfect complexes over $R$.
	If there is a quasi-isomorphism $P^\bullet \otimes_RS \cong Q^\bullet \otimes_R S$
	then there is a quasi-isomorphism $P^\bullet\cong Q^\bullet $.
\end{theorem}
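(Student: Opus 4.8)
The plan is to trade the quasi-isomorphism over $S$ for an $R$-point of the open subscheme $U_{\bbf}\subseteq\AA^n_R$ from the preceding Construction, and to produce such a point by descending to the closed fibre and exploiting that $R/m$ is infinite.

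First I would feed the given quasi-isomorphism $\phi\colon P^\bullet\otimes_RS\xrightarrow{\sim}Q^\bullet\otimes_RS$ into Lemma \ref{l:bb}: it is an element of $\Hom_{D^b(S)}(P^\bullet\otimes_RS,Q^\bullet\otimes_RS)\cong\Hom_{D^b(R)}(P^\bullet,Q^\bullet)\otimes_RS$, so I may write $\phi=\sum_{i=1}^n s_i\otimes f_i$ with $f_i\in\Hom_{D^b(R)}(P^\bullet,Q^\bullet)$ and $s_i\in S$ (if $P^\bullet$ or $Q^\bullet$ is acyclic then so is the other by faithfully flat descent, and there is nothing to prove, so assume $n\ge1$). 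Put $\bbf=(f_1,\dots,f_n)$ and form $\bbf[t_*]$ and $U_{\bbf}\subseteq\AA^n_R$ as in the Construction. Since $P^\bullet$ and $Q^\bullet$ are bounded complexes of free modules, $U_{\bbf}$ is precisely the locus over which $\bbf[t_*]$ becomes a quasi-isomorphism, and by derived Nakayama a morphism from an affine scheme to $\AA^n_R$ factors through $U_{\bbf}$ as soon as it pulls $\bbf[t_*]$ back to a quasi-isomorphism. The tuple $(s_1,\dots,s_n)$ defines such a morphism $\Spec S\to\AA^n_R$ — it pulls $\bbf[t_*]$ back to $\phi$ — hence it factors through $U_{\bbf}$, and in particular $U_{\bbf}\ne\emptyset$.

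The crux is the next step: I claim $U_{\bbf}$ meets the closed fibre $\AA^n_{R/m}\subseteq\AA^n_R$. Here I use that $R\to S$ is faithfully flat — this is the only place it enters, and it holds automatically in the intended application, where $S$ is a local ring of an fppf covering of $\Spec R$ over $m$, so that $R\to S$ is a flat local homomorphism — whence $\Spec S\to\Spec R$ is surjective. Choosing a prime of $S$ over $m$ and composing with the $S$-point above produces a point of $U_{\bbf}$ lying over $m$, i.e.\ a point of the nonempty open subscheme $V:=U_{\bbf}\cap\AA^n_{R/m}$ of $\AA^n_{R/m}$. Since $R/m$ is infinite, $(R/m)^n$ is Zariski dense in $\AA^n_{R/m}$, so $V$ contains an $(R/m)$-rational point $\bar r=(\bar r_1,\dots,\bar r_n)$, and by the description of $U_{\bbf}$ this says exactly that $\sum_i\bar r_i\,(f_i\otimes_R R/m)\colon P^\bullet\otimes_R R/m\to Q^\bullet\otimes_R R/m$ is a quasi-isomorphism over $R/m$.

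Finally I would lift and conclude. Choose $r_i\in R$ with $r_i\equiv\bar r_i\pmod m$ and set $g=\sum_i r_i f_i\colon P^\bullet\to Q^\bullet$, an honest chain map representing the derived class since $P^\bullet$ is a bounded complex of frees. Its cone $C(g)$ is a perfect complex over the Noetherian local ring $(R,m)$, and $C(g)\otimes^{L}_R R/m\simeq C\big(\textstyle\sum_i\bar r_i\,(f_i\otimes_R R/m)\big)$ is acyclic by the previous step. Derived Nakayama over $(R,m)$ — were the top nonvanishing cohomology module of $C(g)$ nonzero, it would survive $\otimes_R R/m$ by ordinary Nakayama — then forces $C(g)\simeq0$, i.e.\ $g$ is the sought quasi-isomorphism $P^\bullet\simeq Q^\bullet$. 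I expect the main obstacle to be the third paragraph: making $U_{\bbf}$ meet the closed fibre (the point at which faithful flatness is unavoidable) and then converting the infinitude of $R/m$ into a rational point that survives transport back along the non-flat reduction $R\to R/m$.
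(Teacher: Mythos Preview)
Your argument is correct and is essentially the paper's own proof, streamlined: you use only one direction of the quasi-isomorphism (the paper redundantly carries the inverse and a second locus $U_{\bbg}$), and your explicit derived-Nakayama step in the last paragraph is equivalent to the paper's implicit observation that the lifted $R$-point of $\AA^n_R$ factors through the open $U_{\bbf}$ because $\Spec R$ is local. More importantly, you have correctly diagnosed and patched a genuine gap: the paper jumps from ``$U_{\bbf}$ nonempty'' to ``$U_{\bbf}$ has an $R/m$-point'', which fails for merely flat $R\to S$ (take $R$ a DVR with uniformizer $\pi$, $S=\Frac R$, $P^\bullet=[R\xrightarrow{\pi}R]$, $Q^\bullet=0$), so faithful flatness --- or at least that $m$ lies in the image of $\Spec S$ --- is indeed required, exactly as you say.
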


\begin{proof}
	
	Let $f:P^\bullet\otimes_R S\rightarrow Q^\bullet\otimes_R S$ and 
	$g:Q^\bullet\otimes_R S \rightarrow P^\bullet\otimes_R S$ be quasi-isomorphisms. By Lemma \ref{l:bb} we 
	obtain lists of homomorphisms
	\[
	\bbf = (f_1,\ldots, f_n)\quad\mbox{and}\quad \bbg = (g_1,\ldots,g_m)
	\]
	with the property that
	\[
	g=\sum_i g_i\otimes s_i\qquad f=\sum_if_i\otimes s_i.
	\]
	The open subschemes $U_\bbf$ and $U_\bbg$ of $\AA_R^n$ and $\AA_R^m$ respectively are 
	non-empty as they have an $S$-point. It follows that they have an $R/m$-point which lifts to 
	an $R$-point we call $\alpha$. Taking derived pullbacks of the generic morphisms along the $R$-point gives us  morphisms of complexes 
	\[
	\bar{f}:P^\bullet \rightarrow Q^\bullet\qquad \bar{g}:P^\bullet\rightarrow Q^\bullet.
	\]
	These morphisms are quasi-isomorphisms as they are derived pullbacks of
	quasi-isomorphims.
\end{proof}

\begin{corollary}
	Let $X$ be a scheme and $E$ a perfect complex on $X$.
In the situation of the theorem, let $f:{\rm Spec}(R)\rightarrow B{\Aut}(E)$ be a 
point. Then $f$ factors through the presentation $*\rightarrow B{\rm Aut}(E)$.
\end{corollary}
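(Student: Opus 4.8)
The plan is to reinterpret the point $f$ as a form of $E$ and then feed it to Theorem~\ref{thm:Zariski Hilbert 90}. Recall that $\B\Aut E$ is a substack of $\Perf$, so $f$ amounts to a perfect complex $F$ on $\Spec R$ lying in $\B\Aut E$; here and below I write $E$ also for its pullback to $\Spec R$. Since $\Aut E$ is algebraic and hence fppf (Corollary~\ref{cor:BAut is algebraic}), Corollary~\ref{cor:classifying map and forms} with $\mathbf P=\mathrm{fppf}$ shows that $F$ is an fppf-form of $E$: there is an fppf covering $U\twoheadrightarrow\Spec R$ together with a quasi-isomorphism $E_U\simeq F_U$.

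Next I would reduce to a single flat ring extension. As $\Spec R$ is quasi-compact and $U\to\Spec R$ is flat and locally of finite presentation, hence open, finitely many affine opens of $U$ already cover $\Spec R$; replacing $U$ by their disjoint union we may assume $U=\Spec S$ with $R\to S$ faithfully flat of finite presentation, in particular flat. Because $R$ is Noetherian, $E$ and $F$ are quasi-isomorphic to strictly perfect complexes $P^\bullet,Q^\bullet$ over $R$, and base change along $R\to S$ gives a quasi-isomorphism $P^\bullet\otimes_RS\simeq E_S\simeq F_S\simeq Q^\bullet\otimes_RS$. The hypotheses of Theorem~\ref{thm:Zariski Hilbert 90} are now in force ($R$ Noetherian local with infinite residue field, $R\to S$ flat, $P^\bullet,Q^\bullet$ strictly perfect), so it produces a quasi-isomorphism $P^\bullet\simeq Q^\bullet$, that is a quasi-isomorphism $E\simeq F$ over $\Spec R$.

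Finally I would package this as the sought factorization. By the computation in the first part of the proof of Proposition~\ref{prop:BAut classifies forms}, the homotopy fibre of the presentation $\ast\to\B\Aut E$ over $f$ is the $\Aut(E)$-torsor over $\Spec R$ whose points are quasi-isomorphisms between $E$ and $F$; the map $f$ factors through $\ast$ exactly when this torsor is trivial, and the quasi-isomorphism $E\simeq F$ produced in the previous step is a global section of it. Hence $f$ factors through $\ast\to\B\Aut E$, as claimed.

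Essentially all of the genuine work is outsourced to Theorem~\ref{thm:Zariski Hilbert 90}; within the present argument the only delicate point is the passage from an arbitrary fppf covering, as delivered by Corollary~\ref{cor:classifying map and forms}, to a single faithfully flat ring map $R\to S$, and in particular keeping that covering surjective, since surjectivity is what lets the internal argument of Theorem~\ref{thm:Zariski Hilbert 90} locate an $R/m$-point of the relevant open subscheme of affine space.
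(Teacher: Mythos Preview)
Your argument is correct and follows the same route as the paper: unpack the point of $\B\Aut E$ as an fppf-form of the pullback of $E$, then invoke Theorem~\ref{thm:Zariski Hilbert 90} to trivialize it over $\Spec R$, hence obtain the lift. Your write-up is considerably more detailed than the paper's three-line proof, in particular making explicit the reduction from an fppf cover to a single faithfully flat ring map $R\to S$ and the passage to strictly perfect representatives; one small slip is that the reference for ``$\Aut E$ is algebraic'' should be Corollary~\ref{cor:Aut is algebraic} rather than Corollary~\ref{cor:BAut is algebraic}.
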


\begin{proof}
	The morphism $f$ is amounts to giving a morphism $\phi:{\rm Spec}(R)\rightarrow X$ and a fppf-form of $L^*\phi(E)$. The form is trivial via the theorem, that
	is it is quasi-isomorphic to $L^*\phi(E)$. Hence the lift exists.
\end{proof}

\begin{bibdiv}
 \begin{biblist}

\bib{dhillon2018stack}{article}{
 author={Dhillon, Ajneet},
 author={Zs\'amboki, P\'al},
 title={On the $\infty$-stack of complexes over a scheme},
 note = {arXiv:1801.06701 [math.AG]},
 label={DZs18}
} 

\bib{gr}{book}{
	title={A study in derived algebraic geometry I},
	author={Gaitsgory, D},
	author={Rozenblyum, N},
	series={Mathematical Surveys and Monographs, vol 221},
	publisher={American Mathematicsl Society},
	date={2017},}

\bib{huybrechts2010}{article}{
   author={Huybrechts, Daniel},
   author={Thomas, Richard P.},
   title={Erratum to: Deformation-obstruction theory for complexes via
   Atiyah and Kodaira-Spencer classes [MR2578562]},
   journal={Math. Ann.},
   volume={358},
   date={2014},
   number={1-2},
   pages={561--563},
}

\bib{illusie}{book}{
   author={Illusie, Luc},
   title={Complexe cotangent et d\'{e}formations. I},
   language={French},
   series={Lecture Notes in Mathematics, Vol. 239},
   publisher={Springer-Verlag, Berlin-New York},
   date={1971},
   pages={xv+355},
   review={\MR{0491680}},
}

\bib{lieblich2006moduli}{article}{
   author={Lieblich, Max},
   title={Moduli of complexes on a proper morphism},
   journal={J. Algebraic Geom.},
   volume={15},
   date={2006},
   number={1},
   pages={175--206},
}

\bib{lurie2009higher}{book}{
   author={Lurie, Jacob},
   title={Higher topos theory},
   series={Annals of Mathematics Studies},
   volume={170},
   publisher={Princeton University Press, Princeton, NJ},
   date={2009},
   pages={xviii+925},
   eprint={http://www.math.harvard.edu/~lurie/papers/highertopoi.pdf},
}

\bib{lurie2011descent}{article}{
    author={Lurie, Jacob},
    title={DAG XI: Descent theorems},
    date={2011},
    eprint={http://www.math.harvard.edu/~lurie/papers/DAG-XI.pdf},
}

\bib{lurie2014higher}{book}{
    author={Lurie, Jacob},
    title={Higher algebra},
    eprint={https://www.math.ias.edu/~lurie/papers/HA.pdf},
    date={2017},
}

\bib{milne}{book}{
	AUTHOR = {Milne, James S.},
	TITLE = {\'{E}tale cohomology},
	SERIES = {Princeton Mathematical Series},
	VOLUME = {33},
	PUBLISHER = {Princeton University Press, Princeton, N.J.},
	YEAR = {1980},
	PAGES = {xiii+323},
}

\bib{stacksproject}{misc}{
	author       = {The {Stacks project authors}},
	title        = {The Stacks project},
	howpublished = {\url{https://stacks.math.columbia.edu}},
	year         = {2021}
	label = {Stacks project}
}

\bib{toen2007moduli}{article}{
   author={To\"{e}n, Bertrand},
   author={Vaqui\'{e}, Michel},
   title={Moduli of objects in dg-categories},
   language={English, with English and French summaries},
   journal={Ann. Sci. \'{E}cole Norm. Sup. (4)},
   volume={40},
   date={2007},
   number={3},
   pages={387--444},
}

\bib{toen2009descente}{article}{
   author={To\"{e}n, Bertrand},
   title={Descente fid\`element plate pour les $n$-champs d'Artin},
   language={French, with English and French summaries},
   journal={Compos. Math.},
   volume={147},
   date={2011},
   number={5},
   pages={1382--1412},
}

\bib{HAG2}{book}{
	AUTHOR = {To\"{e}n, Bertrand and Vezzosi, Gabriele},
	TITLE = {Homotopical algebraic geometry. {II}. {G}eometric stacks and
		applications},
	JOURNAL = {Mem. Amer. Math. Soc.},
	FJOURNAL = {Memoirs of the American Mathematical Society},
	VOLUME = {193},
	YEAR = {2008},
	NUMBER = {902}
}
 \end{biblist}

\end{bibdiv}

\end{document}